\newcommand{\field}[1]{\mathbb{#1}}
\newcommand{\TT}{\field{T}}
\newcommand{\la}{\langle}
\newcommand{\ra}{\rangle}
\newcommand{\ds}{\displaystyle}
\newtheorem{thm}{Theorem}[section]
\newtheorem{cor}[thm]{Corollary}
\newtheorem{prop}[thm]{Proposition}
\theoremstyle{definition}
\newtheorem{dfn}[thm]{Definition}
\theoremstyle{remark}
\newtheorem{rmk}[thm]{Remark}
\newtheorem{example}[thm]{Example}
\newtheorem*{examples*}{Examples}
\numberwithin{equation}{subsection}
\title[Group actions on graphs and crossed products]{Group actions on graphs and $C^*$-correspondences}
\author{Valentin Deaconu}
\address{Valentin Deaconu \\ Department of Math
 \& Stat (084)\\ University
of Nevada\\ Reno NV 89557-0084\\ USA} \email{vdeaconu@unr.edu}
\keywords{$C^*$-algebra; $C^*$-correspondence; Group action; Group representation; Doplicher-Roberts algebra; Graph algebra;  Cuntz-Pimsner algebra.}
\subjclass{Primary 46L05.}
\begin{document}
\begin{abstract}
 If $G$ acts on a  $C^*$-correspondence ${\mathcal H}$ over the $C^*$-algebra $A$ (see Definition \ref{act}), then by the universal property $G$ acts  on the Cuntz-Pimsner algebra ${\mathcal O}_{\mathcal H}$ and we study the crossed product  ${\mathcal O}_{\mathcal H}\rtimes G$ and the fixed point algebra ${\mathcal O}_{\mathcal H}^G$.  Using intertwiners, we define the Doplicher-Roberts algebra ${\mathcal O}_\rho$ of a representation $\rho$ of a compact group $G$ on ${\mathcal H}$ and prove that under certain conditions ${\mathcal O}_{\mathcal H}^G$ is isomorphic to ${\mathcal O}_\rho$. The action of $G$ commutes with the gauge action on ${\mathcal O}_{{\mathcal H}}$, therefore $G$ acts also on the core algebras ${\mathcal O}_{\mathcal H}^{\mathbb T}$, where $\mathbb T$ denotes the unit circle.
We give applications for the action of a group $G$ on  the $C^*$-correspondence ${\mathcal H}_E$ associated  to a topological graph $E$.
If $G$ is finite and  $E$ is discrete and locally finite, we prove that the crossed product $C^*(E)\rtimes G$ is isomorphic to the $C^*$-algebra of a graph of $C^*$-correspondences and stably isomorphic to a locally finite graph algebra. If $C^*(E)$ is simple and purely infinite and the action of $G$ is outer, then $C^*(E)^G$ and $C^*(E)\rtimes G$ are also simple and purely infinite with the same $K$-theory groups.  We illustrate with several examples.
\end{abstract}

\maketitle
\section{introduction}

Suppose the group $G$ acts on a directed (topological) graph $E$. This means that $G$ acts on the vertex space $E^0$ and on the edge space $E^1$, preserving incidences. By duality, we get an action of $G$ on the $C^*$-algebra $C_0(E^0)$ and on the space $C_c(E^1)$, which extends to the $C_0(E^0)-C_0(E^0)$ $C^*$-correspondence ${\mathcal H}_E$.
 In particular, there is a homomorphism $\rho:G\to {\mathcal L}_{\mathbb C}({\mathcal H}_E)$ into the set of invertible ${\mathbb C}$-linear operators on ${\mathcal H}_E$, called a representation  of $G$ on  ${\mathcal H}_E$. By the universal property, this  determines an action of $G$ on the Cuntz-Pimsner algebra ${\mathcal O}_{{\mathcal H}_E}$, also called the graph $C^*$-algebra and denoted $C^*(E)$.  For example, if $G$ finite acts on the graph with one vertex and $n$ loops, then
 we get an $n$-dimensional representation $\rho: G\to {\mathcal L}({\mathbb C}^n)$ and an action on the Cuntz algebra ${\mathcal O}_n$. It is known that  the fixed point algebra ${\mathcal O}_n^G$ is isomorphic to the Doplicher-Roberts algebra ${\mathcal O}_\rho$ (denoted by ${\mathcal O}_G$ in \cite {DR1}), which in turn is a full corner in a Cuntz-Krieger algebra (see \cite{MRS}).

 In a more general setting, given a group $G$ acting on a $A-A$ $C^*$-correspondence ${\mathcal H}$, our goal is to study the  fixed point algebra ${\mathcal O}_{\mathcal H}^G$ and the crossed product ${\mathcal O}_{\mathcal H}\rtimes G$.
 We define the Doplicher-Roberts algebra ${\mathcal O}_\rho$ associated to $\rho:G\to {\mathcal L}_{\mathbb C}({\mathcal H})$  from intertwiners $(\rho^m, \rho^n)$, where $\rho^n=\rho^{\otimes n}$ is the tensor power representation of $G$ on the balanced tensor product ${\mathcal H}^{\otimes n}$.
We prove that in certain cases ${\mathcal O}_\rho$ is isomorphic to ${\mathcal O}_{\mathcal H}^G$ and strongly Morita equivalent to
 ${\mathcal O}_{\mathcal H}\rtimes G$.

 If $G$ is finite and it acts on a  discrete and locally finite graph $E$, we prove that $C^*(E)\rtimes G$ is isomorphic to the $C^*$-algebra of a graph of (minimal) $C^*$-correspondences, constructed using the orbits in $E^0$ and $E^1$ and the characters of the stabilizer groups.  In the proof we use results about the crossed product of a $C^*$-correspondence by a group $G$.
As a consequence, $C^*(E)\rtimes G$ is strongly Morita equivalent to a graph algebra, so its $K$-theory can be computed in terms of the incidence matrix.
Since the action of $G$ commutes with the gauge action of $\TT$ on $C^*(E)$, the group $G$ also acts  on the core AF-algebra $C^*(E)^{\mathbb T}$ and $C^*(E)^{\mathbb T}\rtimes G\cong (C^*(E)\rtimes G)^{\mathbb T}$ is an AF-algebra. We  recover some examples of  group actions on AF-algebras considered by Handelman and Rossmann, see \cite{HR2}.

The paper is organized as follows. In the first section we define group actions on topological graphs and on $C^*$-correspondences, and we extend these actions to the associated Cuntz-Pimsner algebras.  In the next section we define the Doplicher-Roberts algebra associated to a group action on a $C^*$-correspondence.
We continue with general results about crossed products of $C^*$-correspondences and graphs of $C^*$-correspondences. The following section contains  the main result about finite group actions on discrete graphs. We conclude with several examples of group actions on graphs and $K$-theory computations for the crossed product and the fixed point algebra.

\subsection*{Acknowledgements} The author would like to thank Alex Kumjian and Bruce Blackadar for helpful and illuminating discussions.

\section{Group actions on graphs and graph algebras}

 Let $E=(E^0, E^1, r,s)$ be a topological graph (see \cite{DKQ, Ka04}). Recall that $E^0, E^1$ are locally compact Hausdorff spaces and $r,s :E^1\to E^0$ are continuous with $s$ a local homeomorphism. Denote by ${\mathcal H}={\mathcal H}_E$ its $C^*$-correspondence over $A=C_0(E^0)$, obtained by completing $C_c(E^1)$ with the inner product

\[\langle \xi,\eta\rangle(v)=\sum_{s(e)=v}\overline{\xi(e)}\eta(e),\; \xi,\eta\in C_c(E^1)\]
and multiplications

\[(\xi\cdot f)(e)=\xi(e)f(s(e)),\; (f\cdot\xi)(e)=f(r(e))\xi(e).\]
The $C^*$-algebra of a graph $E$ is defined as the Cuntz-Pimsner algebra ${\mathcal O}_{\mathcal H}$ of the $C^*$-correspondence ${\mathcal H}={\mathcal H}_E$, see \cite{Ka04}.
\begin{dfn} Let $E, F$ be two topological graphs. A graph morphism $\varphi:E\to F$ is a pair of continuous  maps $\varphi=(\varphi^0,\varphi^1)$ where $\varphi^i:E^i\to F^i, i=0,1$ such that $\varphi^0\circ r=r\circ \varphi^1$ and $\varphi^0\circ s=s\circ \varphi^1$, i.e. the diagram
\[\begin{CD}E^0@<s<<E^1@>r>>E^0\\@V\varphi^0VV@V\varphi^1VV@V\varphi^0VV\\F^0@<s<<F^1@>r>>F^0\end{CD}\]
is commutative.
An isomorphism of topological graphs is a graph morphism $\varphi=(\varphi^0,\varphi^1)$ such that $\varphi^i$ is
a homeomorphism for $i=0,1$. It follows that $\varphi^{-1}=((\varphi^0)^{-1},(\varphi^1)^{-1})$ is also a graph morphism. We denote by $Aut(E)$ the group of automorphisms of a topological graph $E$.
\end{dfn}
\begin{dfn} A locally compact group $G$ acts on  $E$ if there  are continuous maps $\alpha^i:G\times E^i\to E^i$, write $\alpha^i(g,x)=\alpha^i_g(x)$ for $i=0,1$ or just $g\cdot x$, such that $g\mapsto \alpha_g=(\alpha_g^0,\alpha_g^1)$ is a group homomorphism from $G$ into  $Aut(E)$. This means that $G$ acts on the vertex space $E^0$ and on the edge space $E^1$ such that the actions are compatible with the range and source maps $r,s$. This action can be extended to  finite paths $e_1\cdots e_n \in E^n$ by $g\cdot(e_1\cdots e_n)=(g\cdot e_1)\cdots (g\cdot e_n)$ and similarly to the set of infinite paths $E^\infty$.
\end{dfn}
\begin{rmk}
The group action on the graph $E$ determines a representation $\rho:G\to {\mathcal L}_{\mathbb C}({\mathcal H})$ by invertible ${\mathbb C}$-linear operators on  ${\mathcal H}$ and an
action of $G$ on $C_0(E^0)$ by $*$-automorphisms such that  \[ (\rho(g)\xi)(e)=\xi(g^{-1}\cdot e)\;\;  \text{for}\;  \xi\in C_c(E^1)\quad \text{and} \quad (g\cdot a)(v)=a(g^{-1}\cdot v)\;\; \text{for} \; a\in C_0(E^0).\] A routine verification shows that these actions are compatible with the inner product and the bimodule structure.
\end{rmk}
\begin{dfn}\label{act}
 We say that a locally compact group $G$ acts on the $C^*$-correspondence ${\mathcal H}$ over the $C^*$-algebra $A$ if $G$ acts  on ${\mathcal H}$ via a map $\rho :G\to {\mathcal L}_{\mathbb C}({\mathcal H})$ such that $\rho(g)$ is a ${\mathbb C}$-linear isomorphism and for all $\xi\in {\mathcal H}$ the map $g\mapsto \rho(g)\xi$ is norm continuous,  $G$ acts by $*$-automorphisms on $A$ such that  for all $a\in A$ the map $g\mapsto g\cdot a$ is norm continuous, and  the following compatibility relations are satisfied
 \[ \langle \rho(g)\xi,\rho(g)\eta\rangle=g\cdot\langle\xi,\eta\rangle ,\]\[ \rho(g)(\xi a)=(\rho(g)\xi)(g\cdot a),\; \rho(g)(a\xi)=(g\cdot a)(\rho(g)\xi).\]
 The map $\rho$   is called a representation of $G$ on $\mathcal H$.
 \end{dfn}

 \begin{rmk}
In particular, a group action on the graph $E$ determines as above a group action on the $C^*$-correspondence ${\mathcal H}_E$.  Notice though that a group  action on a  $C^*$-correspondence associated to a directed graph is not necessarily determined by an action on the  graph, see Example \ref{sym}.
 \end{rmk}

\begin{thm}
An action of $G$ on the $C^*$-correspondence ${\mathcal H}$ determines in a natural way an action on ${\mathcal K}_A({\mathcal H})$, the $C^*$-algebra generated by the finite rank operators, and an action on the Cuntz-Pimsner algebra ${\mathcal O}_{\mathcal H}$. The action of $G$ commutes with the gauge action, therefore we  get  an action of $G$ on the core algebra ${\mathcal O}_{\mathcal H}^{\mathbb T}$, the fixed point algebra under the gauge action. In particular, an action on a topological graph $E$ determines an action on the graph algebra $C^*(E)$ by $g\cdot S_e=S_{g\cdot e}$, where $S_e$ is a generator of $C^*(E)$ for $e\in E^1$, and an action on the core algebra $C^*(E)^{\mathbb T}$.
\end{thm}
\begin{proof} Recall that ${\mathcal K}_A({\mathcal H})$ is generated by operators $\theta_{\xi,\eta}$ where $\theta_{\xi,\eta}(\zeta)=\xi\la\eta,\zeta\ra$ and we define $g\cdot\theta_{\xi,\eta}=\theta_{\rho(g)\xi,\rho(g)\eta}$. The first part follows from the universal property of ${\mathcal O}_{\mathcal H}$. Recall that the gauge action $\gamma$ on ${\mathcal O}_{\mathcal H}$ is defined on generators by $\gamma(z)a=a, \gamma(z)\xi=z\xi$ for $z\in {\mathbb T}$ and is extended to ${\mathcal O}_{\mathcal H}$ using the universal property. Since $\rho(g):{\mathcal H}\to{\mathcal H}$ is ${\mathbb C}$-linear, we have $\rho(g)(z\xi)=z\rho(g)\xi$, so we get an action of $G$ on the core algebra ${\mathcal O}_{\mathcal H}^{\mathbb T}$.
\end{proof}
Recall that a discrete graph (a topological graph where $E^0, E^1$ are at most countable) is row finite if each vertex receives finitely many edges, and is locally finite if in addition each vertex emits finitely many edges. For free actions on discrete graphs we have the following result:

\begin{thm}(Kumjian and Pask, \cite{KP})
If $G, E$ are discrete, the action of $G$ on $E$ is free and $E$ is  locally finite,  then $C^*(E)^G\cong C^*(E/G)$ and
\[C^*(E)\rtimes G\cong C^*(E/G)\otimes {\mathcal K}(\ell^2(G)),\]
where $E/G$ is the quotient graph.
\end{thm}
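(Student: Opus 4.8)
The plan is to reduce the statement to a skew-product description of $E$ and then apply crossed-product duality. First I would check that the quotient $E/G$, with $(E/G)^i=E^i/G$ and with $r,s$ induced from those of $E$, is again a discrete, row finite, locally finite graph: freeness of the action on $E^0$ makes the map $\{e\in E^1:r(e)=v\}\to\{[e]\in(E/G)^1:r([e])=[v]\}$ a bijection for each representative $v$, and likewise for $s$, so edges and received edges are counted correctly and $\pi:E\to E/G$ is a covering of graphs with unique lifting of finite and infinite paths. Then, by the Gross--Tucker structure theorem for free group actions on graphs, I would fix a voltage cocycle $c:(E/G)^1\to G$ and a $G$-equivariant isomorphism of $E$ with the skew-product graph $(E/G)\times_c G$, under which the action of $G$ on $E$ becomes translation in the group coordinate. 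Writing $F=E/G$, we may thus assume $E=F\times_c G$.

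For the crossed product I would invoke the standard dictionary between skew products of graphs and crossed products by coactions: $C^*(F\times_c G)\cong C^*(F)\times_{\delta_c}G$, where $\delta_c$ is the coaction of $G$ on $C^*(F)$ determined on generators by $S_e\mapsto S_e\otimes u_{c(e)}$ and $p_v\mapsto p_v\otimes 1$. Under this identification the translation action of $G$ on $C^*(F\times_c G)$ is exactly the action $\widehat{\delta_c}$ dual to $\delta_c$, so Katayama (Imai--Takai) crossed-product duality gives
\[
C^*(E)\rtimes G=\bigl(C^*(F)\times_{\delta_c}G\bigr)\rtimes_{\widehat{\delta_c}}G\cong C^*(F)\otimes\mathcal{K}(\ell^2(G))=C^*(E/G)\otimes\mathcal{K}(\ell^2(G)),
\]
which is the second isomorphism. (For infinite $G$ one uses reduced crossed products throughout; nothing is lost, since $\delta_c$ is itself a dual coaction, and for $G$ finite there is nothing to check.)

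For the fixed point algebra I would exhibit the generators by hand. Choosing representatives $v$ of each $[v]\in(E/G)^0$ and $e_0$ of each $[e]\in(E/G)^1$, put $q_{[v]}=\sum_{w\in\pi^{-1}([v])}p_w$ and $T_{[e]}=\sum_{g\in G}S_{g\cdot e_0}$ (finite sums when $G$ is finite, strict limits in the multiplier algebra of $C^*(E)$ in general). Using $S_{g\cdot e_0}^*S_{h\cdot e_0}=0$ for $g\neq h$, one checks directly that $\{q_{[v]},T_{[e]}\}$ is a Cuntz--Krieger $(E/G)$-family, manifestly $G$-invariant, which by the gauge-invariant uniqueness theorem generates a copy of $C^*(E/G)$. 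It remains to identify this copy with the fixed point algebra: for $G$ finite this is a spanning argument, because every element of $C^*(E)^G$ is a norm-limit of averages $\frac{1}{|G|}\sum_{g\in G}g\cdot(S_\mu S_\nu^*)$ of the monomials spanning $C^*(E)$, and $\sum_{g\in G}g\cdot(S_\mu S_\nu^*)=T_{[\mu]}T_{[\nu]}^*$ for suitably chosen path representatives, hence lies in $C^*(\{q_{[v]},T_{[e]}\})$; for infinite $G$ one reads $C^*(E)^G$ as the generalized fixed point algebra attached to the (free and proper) action of $G$ on $C^*(E)$, which is exactly the complementary corner to $\mathcal{K}(\ell^2(G))$ in the stabilization above, hence again $\cong C^*(E/G)$.

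I expect the main obstacle to be the bookkeeping behind the skew-product/coaction identification and pinning down the duality precisely --- in particular verifying that the translation action matches $\widehat{\delta_c}$ and handling the full-versus-reduced subtlety for infinite $G$; once that dictionary is in place, both isomorphisms are formal. A more hands-on route avoids coactions altogether: the action of $G$ on the path groupoid $\mathcal{G}_E$ induced by the action on $E$ is free and proper, so $\mathcal{G}_E\rtimes G$ is a groupoid equivalent to $\mathcal{G}_E/G=\mathcal{G}_{E/G}$, which gives a strong Morita equivalence $C^*(E)\rtimes G\sim C^*(E/G)$; the $G$-equivariant splitting $E^\infty\cong(E/G)^\infty\times G$ coming from Gross--Tucker upgrades this to the stable isomorphism, with $C^*(E)^G$ the opposite corner of the imprimitivity bimodule.
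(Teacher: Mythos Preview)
The paper does not supply a proof of this theorem: it is stated with attribution to Kumjian and Pask \cite{KP}, followed only by the remark that it ``is inspired from a theorem of Green about group actions on spaces, see \cite{G}.'' There is thus no argument in the present paper to compare your proposal against.

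For what it is worth, your outline follows the strategy of the original Kumjian--Pask paper: they too use the Gross--Tucker skew-product description of a free action, and the groupoid route you sketch at the end is in fact the framework in which \cite{KP} is written. One caution on your direct construction of the fixed-point algebra: for infinite $G$ the sums $T_{[e]}=\sum_{g\in G}S_{g\cdot e_0}$ land only in the multiplier algebra, not in $C^*(E)$, so the Cuntz--Krieger $(E/G)$-family you write down does not literally sit inside $C^*(E)^G$. Your retreat to Rieffel's generalized fixed-point algebra is the right fix, but the explicit spanning argument you give is really only valid for finite $G$; in the infinite case you must lean on the duality or groupoid-equivalence machinery rather than on averaging.
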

This result is inspired from a theorem of Green about group actions on locally compact spaces, see \cite{G}.
A similar result was proved for free and proper actions of locally compact groups on topological graphs in \cite{DKQ}, namely that $C^*(E)\rtimes_rG$ is strongly Morita equivalent to $C^*(E/G)$.

In the same paper \cite{KP}, Kumjian and Pask  showed that if $G$ is abelian and  $c:E^1\to \hat{G}$ is  a cocycle, then this induces an action of $G$ on $C^*(E)$ such that $C^*(E)\rtimes G$ is isomorphic to $C^*(E(c))$, where  $E(c)$ is the skew product graph $(\hat{G}\times E^0,\hat{G}\times E^1, r,s)$ with
\[r(\chi,e)=(\chi c(e), r(e)), s(\chi,e)=(\chi,s(e))\]
for $\chi\in \hat{G}$.
By diagonalization, the action of $G$ on $C^*(E)$ is equivalent to the action $\alpha$  given by $\ds \alpha_g(S_e)=\la c(e),g\ra S_e$, where $S_e$ are the generators of $C^*(E)$.
\begin{rmk}
If $G$ abelian acts on the ${\mathcal O}_n$-graph with   $E^1=\{e_1,e_2,...,e_n\}$ and $E^0=\{v\}$, a cocycle $c:E^1\to \hat{G}$ determines a  representation $\rho$ of $G$ on ${\mathcal H}=span\{\xi_1,\xi_2,...,\xi_n\}$, where
$\rho(g)\xi_i=\la c(e_i),g\ra \xi_i$. Conversely, an  $n$-dimensional representation of the abelian group $G$ determines a cocycle on the ${\mathcal O}_n$-graph with values in $\hat{G}$.
\end{rmk}
\begin{rmk}
Actions of ${\mathbb Z}^l$ on $k$-graphs were studied by Farthing, Pask and Sims in \cite{FPS}. In particular, $K$-theory computations were done for actions of ${\mathbb Z}$ on a row finite $1$-graph with no sources such that  the orbit of each vertex is finite and either $K_0(C^*(E))$ or $K_1(C^*(E))$ is trivial.
\end{rmk}

\section{Doplicher-Roberts algebras}

The Doplicher-Roberts algebras (denoted  by ${\mathcal O}_G$ in \cite{DR1}) were introduced to construct a new duality theory for compact Lie groups $G\subseteq U(n)$ which strengthens the Tannaka-Krein duality. Let ${\mathcal T}_G$ denote the representation category whose objects are tensor powers of the  $n$-dimensional representation $\rho$ of $G$ defined by the inclusion $G\subseteq U(n)$ and whose arrows are the intertwiners. The  $C^*$-algebra ${\mathcal O}_G$ is identified in \cite{DR1} with the fixed point algebra ${\mathcal O}_n^G$, where ${\mathcal O}_n$ is the Cuntz algebra. If $\sigma_G$ denotes the restriction to ${\mathcal O}_G$ of the canonical endomorphism of the Cuntz algebra, then ${\mathcal T}_G$ can be reconstructed from the pair $({\mathcal O}_G,\sigma_G)$. Subsequently, Doplicher-Roberts algebras were associated to any object $\rho$ in a strict tensor $C^*$-category, see \cite {DR2}, \cite{DPZ}.

Suppose that the group $G$ acts on the $C^*$-correspondence ${\mathcal H}$ over $A$ via the representation $\rho:G\to {\mathcal L}_{\mathbb C}({\mathcal H})$. Inspired from \cite{DR1}, we consider the tensor power representation $\rho^n:G\to {\mathcal L}_{\mathbb C}({\mathcal H}^{\otimes n})$,  where ${\mathcal H}^{\otimes n}$ is the balanced tensor product of $n$ copies of ${\mathcal H}$ over $A$, and we define the set $(\rho^m,\rho^n)$ of intertwining operators by
\[(\rho^m,\rho^n)=\{T\in{\mathcal L}_A({\mathcal H}^{\otimes n},{\mathcal H}^{\otimes m})\mid   T\rho^n=\rho^mT\}.\]
 By definition ${\mathcal H}^{\otimes 0}=A$ and $\rho^0:G\to {\mathcal L}_{\mathbb C}(A)$ is the trivial representation $\rho^0(g)(a)=a$. We identify $(\rho^m,\rho^n)$ with a subset of $(\rho^{m+r},\rho^{n+r})$ via $T\mapsto T\otimes I_r$, where $I_r:{\mathcal H}^{\otimes r}\to {\mathcal H}^{\otimes r}$ is the identity map.
After this identification, it follows that the linear span ${}^0{\mathcal O}_\rho$ of $\displaystyle \bigcup_{m,n\ge 0}(\rho^m, \rho^n)$ has a natural multiplication  given by composition: if $S\in (\rho^m,\rho^n)$ and $T\in (\rho^p,\rho^q)$, then the product $ST$ is
 \[(S\otimes I_{p-n})\circ T\in (\rho^{m+p-n},\rho^q) \;\text{if}\; p\ge n,\]
 or
 \[S\circ(T\otimes I_{n-p})\in(\rho^m,\rho^{q+n-p}) \;\text{if}\; p<n.\]
 The adjoint of $T\in(\rho^m,\rho^n)$ is $T^*\in (\rho^n,\rho^m)$.
 We assume that
 \[\|T\|=\sup\{\|\pi(T)\|:\;\pi\;\text{is a}\;*-\text{representation of}\; {}^0{\mathcal O}_\rho\;\text{on a Hilbert space}\}\]
 is finite.
 \medskip
\begin{dfn}
 Under this assumption, we define the Doplicher-Roberts algebra ${\mathcal O}_\rho$ associated to the representation $\rho:G\to {\mathcal L}_{\mathbb C}({\mathcal H})$ as the $C^*$-closure of the normed $*$-algebra ${}^0{\mathcal O}_\rho$ with the above operations.
\end{dfn}
\begin{rmk} The $*$-algebra ${}^0{\mathcal O}_\rho$ has a natural ${\mathbb Z}$-grading and tensoring with $I$ on the left induces a $*$-endomorphism $\sigma$.
\end{rmk}

\begin{thm} Let ${\mathcal H}$ be a full finite projective $C^*$-correspondence over $A$ (i.e. ${\mathcal H}$ is a direct summand of $A^k$ for some $k$ and the inner products generate $A$) and assume that the left multiplication $A\to {\mathcal L}(\mathcal H)$ is injective. If $G$ is a compact group acting on $\mathcal H$ via $\rho:G\to  {\mathcal L}_{\mathbb C}({\mathcal H})$, then the Doplicher-Roberts algebra ${\mathcal O}_\rho$ is well defined and it is isomorphic to the fixed point algebra ${\mathcal O}_{\mathcal H}^G$.
\end{thm}
\begin{proof}
Since ${\mathcal H}$ is finite projective and the left multiplication is injective, it is known that ${\mathcal L}_A({\mathcal H})\cong{\mathcal K}_A({\mathcal H})$ and that ${\mathcal O}_{\mathcal H}$ is isomorphic to the $C^*$-algebra generated by the span of
$\ds \bigcup_{m,n\ge 0}{\mathcal K}_A({\mathcal H}^{\otimes m}, {\mathcal H}^{\otimes n})$
after we identify $T$ with $T\otimes I$ (see Proposition 2.5 in \cite{KPW}).
Note that $G$ acts on ${\mathcal K}_A({\mathcal H}^{\otimes n}, {\mathcal H}^{\otimes m})$ by $(g\cdot T)(\xi)=\rho^m(g)T(\rho^n(g^{-1})\xi)$ and the fixed point algebra is $(\rho^m, \rho^n)$. It follows that ${}^0{\mathcal O}_\rho\subseteq {\mathcal O}_{\mathcal H}$ and  that ${\mathcal O}_\rho$ is isomorphic to ${\mathcal O}_{\mathcal H}^G$.
\end{proof}
\begin{cor} Let $E$ be a topological graph such that ${\mathcal H}_E$ is full finite projective  and the left multiplication of $C_0(E^0)$ is injective. If $G$ is a compact
group acting on $E$,
and  $\rho:G\to {\mathcal L}_{\mathbb C}({\mathcal H}_E)$ denotes the representation,
then ${\mathcal O}_\rho\cong C^*(E)^G$.

Moreover, if  $C^*(E)$ is simple and purely infinite, $G$ is finite and the action on $C^*(E)$ is (pointwise) outer, then ${\mathcal O}_\rho$ and $C^*(E)\rtimes G$ are simple purely infinite and have the same $K$-theory, therefore are stably isomorphic.
 \end{cor}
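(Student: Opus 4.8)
The first assertion needs nothing new: $C^*(E)=\Oo_{\Hh_E}$ by definition, the hypotheses on $E$ say precisely that $\Hh_E$ is finite projective with injective left action, and $G$ is compact, so the preceding theorem applies and gives $\Oo_\rho\cong\Oo_{\Hh_E}^G=C^*(E)^G$ at once. The work is therefore in the ``moreover'' clause, and I would build it on this identification.

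First, simplicity. Since $C^*(E)$ is simple and the induced action $\alpha$ of the finite group $G$ on $C^*(E)$ is pointwise outer, Kishimoto's theorem on outer actions shows that $C^*(E)\rtimes G$ is simple. Next I would locate $C^*(E)^G$ inside the crossed product in the usual way: with $p=\frac{1}{|G|}\sum_{g\in G}u_g$, a short computation using $u_hp=p=pu_h$ shows that $p$ is a nonzero projection and that the corner $p\,(C^*(E)\rtimes G)\,p$ is isomorphic to $C^*(E)^G$; since $C^*(E)\rtimes G$ is simple, $p$ is full, so $\Oo_\rho\cong C^*(E)^G$ is a full corner of $C^*(E)\rtimes G$. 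This immediately yields that $C^*(E)^G$ is simple, that the two algebras are strongly Morita equivalent, that their $K$-theory groups coincide, and — by separability — that they are stably isomorphic (Brown--Green--Rieffel). So everything but pure infiniteness falls out of this corner picture.

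Pure infiniteness is where I expect the real work to be. I would deduce it from the permanence of pure infiniteness under outer actions of finite groups on Kirchberg algebras: for instance, by an equivariant $\Oo_\infty$-absorption argument $C^*(E)\rtimes G\cong(C^*(E)\otimes\Oo_\infty)\rtimes G\cong(C^*(E)\rtimes G)\otimes\Oo_\infty$, and a $C^*$-algebra that absorbs $\Oo_\infty$ is purely infinite, so $C^*(E)\rtimes G$ — hence its Morita-equivalent corner $C^*(E)^G$ — is purely infinite. (An alternative, closer to the later sections of the paper: knowing that $C^*(E)\rtimes G$ is stably isomorphic to a graph $C^*$-algebra, one only needs that it contains an infinite projection inherited from $C^*(E)\subseteq C^*(E)\rtimes G$, so that this simple graph algebra is not AF and is therefore purely infinite by the AF/purely-infinite dichotomy.) With simplicity, pure infiniteness, separability, nuclearity, the UCT, and coinciding $K$-theory all in hand, the Kirchberg--Phillips classification theorem gives stable isomorphism of $\Oo_\rho\cong C^*(E)^G$ and $C^*(E)\rtimes G$, consistent with what the corner picture already supplied. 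In short, the only nonroutine inputs beyond the theorem above are Kishimoto's simplicity result and the pure-infiniteness statement, and almost all of the actual work is in the latter.
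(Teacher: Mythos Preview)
Your argument is correct and follows the same architecture as the paper: the first assertion is the preceding theorem specialized to $\Hh_E$, the fixed-point algebra is identified with a full corner of the crossed product via the averaging projection, and stable isomorphism then follows from Morita equivalence together with classification. The one substantive difference is in how pure infiniteness of $C^*(E)\rtimes G$ is obtained. You treat simplicity and pure infiniteness separately, invoking Kishimoto's outer-action theorem for the former and then reaching for heavier machinery (equivariant $\Oo_\infty$-absorption, or the AF/purely-infinite dichotomy for graph algebras from later in the paper) for the latter. The paper instead cites a single result of Kishimoto--Kumjian (\cite[Lemma~10]{KK}, combined with \cite[Theorem~3.1]{K1}): if $A$ is simple and purely infinite and $\alpha$ is a pointwise outer action of a discrete group $G$, then $A\rtimes_{\alpha,r}G$ is simple and purely infinite. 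This dispatches both properties at once and avoids the equivariant absorption step (whose justification --- that the given outer action is conjugate to its tensor product with the trivial action on $\Oo_\infty$ --- is itself a nontrivial input). Your alternative via the graph-algebra dichotomy also works but is logically awkward here, since it appeals to structural results proved only in later sections.
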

 \begin{proof}

The first part follows directly from the above theorem. The second part  is a consequence of a result of A. Kishimoto and A. Kumjian (see Lemma 10 in \cite{KK} and Theorem 3.1 in \cite{K1}) : If $A$ is simple and purely infinite, $G$ is discrete and $\alpha:G\to Aut(A)$ is an action such that $\alpha_g$ is outer for all $g\in G\setminus\{e\}$, then $A\rtimes_{\alpha r}G$ is simple and purely infinite. The stable isomorphism follows from the fact that ${\mathcal O}_\rho\cong C^*(E)^G$ is a full corner in $C^*(E)\rtimes G$ and from classification results of simple separable purely infinite algebras satisfying UCT.
\end{proof}
\begin{rmk}
The natural inclusions $C^*(E)^G\subseteq C^*(E)\subseteq C^*(E)\rtimes G$ determine group homomorphisms \[K_0(C^*(E)^G)\to K_0(C^*(E))\to K_0(C^*(E)\rtimes G). \] Assuming $C^*(E)$ is unital, these homomorphisms give information on the class of the identity in $K_0(C^*(E)^G)$ and $K_0(C^*(E)\rtimes G)$.
\end{rmk}

 \begin{example}If $\gamma$ is the gauge action of ${\mathbb T}$ on a $C^*$-correspondence ${\mathcal H}$ over $A$, then ${\mathcal O}_\gamma\cong{\mathcal O}_{\mathcal H}^{\mathbb T}$.
 \end{example}
 \begin{example} If the group $G$ acts on a $C^*$-algebra $A$ and $\pi :G\to U(n)$ is a faithful unitary representation, then ${\mathcal H}={\mathbb C}^n\otimes A$ has a natural structure of $C^*$-correspondence over $A$ such that $G$ acts on ${\mathcal H}$ by $\rho(g)(x\otimes a)=\pi(g)x\otimes g\cdot a$. It is easy to check that in this case ${\mathcal O}_\rho$ is well defined and it is isomorphic to ${\mathcal O}_\pi\otimes A^G$, where ${\mathcal O}_\pi$ is the (old) Doplicher-Roberts algebra associated to the representation $\pi$.
 \end{example}


 \begin{rmk} Let the group $G$ act on the $C^*$-correspondence ${\mathcal H}$ over $A$. We have $A^G\subseteq (\rho, \rho)$, where $A^G$ denotes the fixed point algebra. Indeed, if $a\in A^G$, then \[a(\rho(g)\xi)=(g\cdot a)(\rho(g)\xi)=\rho(g)(a\xi).\]
\end{rmk}

\begin{example}
Consider a finite group $G$ acting  on the graph $E_n$ with one vertex and $n\ge 2$ edges. We denote by $\rho$ the corresponding representation on  ${\mathcal H}={\mathcal H}_E={\mathbb C}^n$.

Let $\hat{G}$ denote the set of equivalence classes of irreducible unitary representations, and construct as in \cite{MRS} a graph with the incidence matrix $B=B(\rho)$, where $B(v,w)$ is the multiplicity of $w$ in $v\otimes \rho$ for $v,w\in \hat{G}$.
It is shown in \cite{MRS}  that ${\mathcal O}_\rho$ is a full corner in the Cuntz-Krieger algebra ${\mathcal O}_B$.

For $G=S_n$ the symmetric group acting by permuting the edges of $E_n$, we get an outer  action on the Cuntz algebra ${\mathcal O}_n$ such that ${\mathcal O}_n\rtimes S_n$ is simple and purely infinite, stably isomorphic to
 ${\mathcal O}_\rho\cong {\mathcal O}_n^{S_n}$.  We also get an action of $S_n$ on the core algebra ${\mathcal O}_n^{\mathbb T}\cong M_{n^\infty}$ such that $M_{n^\infty}\rtimes S_n$ is AF.

For $n=3$, using the character table of $S_3$, it was calculated in \cite{MRS} that \[B=\left[\begin{array}{ccc}1&0&1\\0&1&1\\1&1&2\end{array}\right],\] which gives
\[K_0({\mathcal O}_3\rtimes S_3)= K_0({\mathcal O}_\rho)= K_0({\mathcal O}_B)\cong{\mathbb Z},\]\[ K_1({\mathcal O}_3\rtimes S_3)= K_1({\mathcal O}_\rho)= K_1({\mathcal O}_B)\cong{\mathbb Z},\]
\[K_0(M_{3^\infty}\rtimes S_3)\cong \varinjlim({\mathbb Z}^3,B).\]

The inclusions ${\mathcal O}_3^{S_3}\hookrightarrow {\mathcal O}_3\hookrightarrow {\mathcal O}_3\rtimes S_3$ determine the $K_0$-theory maps ${\mathbb Z}\to {\mathbb Z}_2\to {\mathbb Z}$. In particular the action of $S_3$ on ${\mathcal O}_3$ does not have the Rokhlin property, since the  map ${\mathbb Z}\to {\mathbb Z}_2$ is not injective (see Theorem 3.13 in \cite{Iz})
and ${\mathcal O}_3^{S_3}$, ${\mathcal O}_3\rtimes S_3$ are not isomorphic since  the classes of the identity in their $K_0$-groups do not coincide.

\end{example}
\begin{rmk}
If $R(S_3)\cong K_0(S_3)$ is the representation ring of $S_3$, then the matrix $B$ above is determined by the map $R(S_3)\to R(S_3)$ given by multiplication with the character of the representation $\rho$ (see \cite{HR1, HR2}).
\end{rmk}
\begin{rmk}
An action of a group $G$ on a row-finite (discrete) graph $E$ with no sources induces an action of $G$ on the associated graph groupoid ${\mathcal G}={\mathcal G}_E$ such that $C^*(E)\rtimes G\cong C^*({\mathcal G}\rtimes G)$, where ${\mathcal G}\rtimes G$ is the semidirect product groupoid with multiplication
\[(\gamma, g)(g^{-1}\cdot\gamma',g')=(\gamma\gamma', gg'),\] inverse operation \[(\gamma, g)^{-1}=(g^{-1}\cdot\gamma^{-1}, g^{-1})\] and range and source maps \[r(\gamma, g)=(r(\gamma),e), \;\; s(\gamma, g)=(g^{-1}\cdot s(\gamma), e).\] The unit space of ${\mathcal G}\rtimes G$ is identified with ${\mathcal G}^0$.

In particular, for the $S_n$  action above we get an action of $S_n$
on the Cuntz groupoid \[{\mathcal G}_n=\{(x,p-q,y)\in X\times{\mathbb Z}\times X: \sigma^px=\sigma^qy\},\] where $\sigma:X\to X$ is the shift
on the unit space $X=\{1,...,n\}^{\mathbb N}$ such that \[{\mathcal O}_n\rtimes S_n\cong C^*({\mathcal G}_n\rtimes S_n).\]
\end{rmk}
\begin{example}
Given a finite-dimensional unitary representation $\rho$ of a compact group $G$, Kumjian, Pask, Raeburn and Renault (see \cite{KPRR})  realize the Doplicher-Roberts algebra ${\mathcal O}_\rho$ as a corner in a graph $C^*$-algebra  and as a groupoid algebra. The graph has vertices $\hat{G}$, the set of equivalence classes of irreducible representations, and the groupoid is the reduction of the graph groupoid to the set of infinite paths starting at the trivial representation. It turns out that if $\rho$ takes values in $SU(n)$ and is faithful, then the graph is irreducible and locally finite, in particular ${\mathcal O}_\rho$ is simple.
Moreover, if $n\ge 2$, $G$ is an infinite compact Lie group and $\beta_\rho$ denotes the endomorphism of the representation ring $R(G)$ given by tensoring with $\rho$, then $K_0({\mathcal O}_\rho)\cong R(G)/\text{im} (1-\beta_\rho)$ and $K_1({\mathcal O}_\rho)=0$. This last result appeared also in A. Wassermann's thesis \cite{W}.
\end{example}

\section{Group actions on $C^*$-correspondences and crossed products}

We will need to allow $B$--$A$  $C^*$-correspondences where $A$ and $B$ are not necessarily the same $C^*$-algebras, so we  extend our notion of group action:

\begin{dfn}
Given  $C^*$-algebras $A,B$ and a $B$--$A$ $C^*$-correspondence ${\mathcal H}$, an action of a locally compact group $G$ on ${\mathcal H}$ is determined by a homomorphism $\rho:G\to {\mathcal L}_{\mathbb C}({\mathcal H})$ such that $\rho(g)$ is a ${\mathbb C}$-linear isomorphism and $g\mapsto \rho(g)\xi$ is continuous  and continuous actions of $G$ on $A$ and $B$ by $*$-automorphisms with compatibility relations
\[ \langle \rho(g)\xi,\rho(g)\eta\rangle=g\cdot\langle\xi,\eta\rangle ,\]\[ \rho(g)(\xi a)=(\rho(g)\xi)(g\cdot a),\; \rho(g)(b\xi)=(g\cdot b)(\rho(g)\xi),\]
where $\xi\in {\mathcal H}, a\in A, b\in B$.
\end{dfn}
As we mentioned before, an action of $G$ on ${\mathcal H}$ determines an action of $G$ on ${\mathcal K}(\mathcal H)$ given by $g\cdot \theta_{\xi,\eta}=\theta_{\rho(g)\xi, \rho(g)\eta}$, where $\theta_{\xi,\eta}(\zeta)=\xi\la \eta,\zeta\ra$.
Recall that if $A=B$, an action of $G$ on ${\mathcal H}$  determines an action on the Cuntz-Pimsner algebra ${\mathcal O}_{\mathcal H}$ (called quasi-free) and, since the action commutes with the gauge action, an action on the core algebra ${\mathcal O}_{\mathcal H}^{\mathbb T}$.

\begin{dfn}Suppose the group $G$ acts on the $B$--$A$ $C^*$-correspondence ${\mathcal H}$.
 The  crossed product $C^*$-correspondence ${\mathcal H}\rtimes G$ is defined
as ${\mathcal H}\rtimes G={\mathcal H}\otimes_{\varphi}(A\rtimes G)$, where $\varphi: A\to {\mathcal L}(A\rtimes G)$ is the embedding of $A$ in the multiplier algebra of $A\rtimes G$, regarded as a Hilbert module over itself.
 \end{dfn}
 \begin{rmk}
The crossed product ${\mathcal H}\rtimes G$ becomes a $B\rtimes G$--$A\rtimes G$ $C^*$-correspondence  after the completion of $C_c(G,{\mathcal H})$ using the operations
  \[\langle \xi, \eta\rangle(t)=\int_Gs^{-1}\cdot \langle \xi(s),\eta(st)\rangle ds,\]\[(\xi\cdot f)(t)=\int_G\xi(s)(s\cdot(f(s^{-1}t)))ds,\]\[(h\cdot\xi)(t)=\int_Gh(s)\cdot(s\cdot \xi(s^{-1}t))ds,\]
  where $\xi,\eta\in C_c(G,{\mathcal H}), f\in C_c(G,A), h\in C_c(G,B)$. Note that the right and left multiplications are given by convolution, and the inner product formula could be also expressed as
  \[\langle\xi\otimes f, \eta\otimes f'\rangle=f^*\langle \xi, \eta\rangle f',\]
  where this time $\xi, \eta\in {\mathcal H}$, $ f,f'\in C_c(G,A)$ and $f^*(t)=t\cdot f(t^{-1})^*$.
\end{rmk}
\begin{thm}(G. Hao, C.-K. Ng, \cite{HN}) Let ${\mathcal H}$ be a $C^*$-correspondence over $A$ and let the amenable locally compact group $G$ act on ${\mathcal H}$.  Then
\[{\mathcal O}_{{\mathcal H}\rtimes G}\cong {\mathcal O}_{\mathcal H}\rtimes G.\]
\end{thm}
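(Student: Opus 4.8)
The plan is to identify the two $C^*$-algebras through their universal properties, with Katsura's gauge-invariant uniqueness theorem supplying the final injectivity. Write $\Oo_{\Hh}$ for Katsura's Cuntz-Pimsner algebra, $(i_A,i_{\Hh})$ for its universal covariant representation (so $i_A$ is faithful and $\Oo_{\Hh}$ is generated by $i_A(A)\cup i_{\Hh}(\Hh)$), and $\alpha$ for the quasi-free action of $G$ on $\Oo_{\Hh}$ induced by $\rho$; inside $\Oo_{\Hh}\rtimes G$ we have the canonical copy of $A\rtimes G$ and the canonical unitaries $u_g\in M(\Oo_{\Hh}\rtimes G)$. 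First I would build a Cuntz-Pimsner covariant representation of the $A\rtimes G$-correspondence $\Hh\rtimes G$ inside $\Oo_{\Hh}\rtimes G$: take $\pi\colon A\rtimes G\hookrightarrow\Oo_{\Hh}\rtimes G$ the canonical inclusion, and for $\xi\in C_c(G,\Hh)\subseteq\Hh\rtimes G$ let $t(\xi)\in C_c(G,\Oo_{\Hh})\subseteq\Oo_{\Hh}\rtimes G$ be the function $s\mapsto i_{\Hh}(\xi(s))$. Using $i_{\Hh}(\eta)^*i_{\Hh}(\zeta)=i_A(\la\eta,\zeta\ra)$, the equivariance relations of Definition \ref{act}, and the convolution formulas for $\Hh\rtimes G$ recalled above, a direct calculation on $C_c(G,\cdot)$ gives $\la t(\xi),t(\eta)\ra=\pi(\la\xi,\eta\ra)$ and shows that $t$ is a $\pi$-bimodule map, so $(\pi,t)$ is a representation of $\Hh\rtimes G$.

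The crucial point is Cuntz-Pimsner covariance. There is a natural identification $\Kk(\Hh\rtimes G)\cong\Kk(\Hh)\rtimes G$ under which the left action $\varphi_{\Hh\rtimes G}$ becomes $\varphi_{\Hh}\rtimes G$; since $G$ is amenable the crossed product functor is exact and behaves well with respect to the relevant annihilators, so Katsura's ideals satisfy $J_{\Hh\rtimes G}=J_{\Hh}\rtimes G$ inside $A\rtimes G$. The induced homomorphism $\psi_t$ on $\Kk(\Hh\rtimes G)$ is then identified with $\psi_{i_{\Hh}}\rtimes G$, so covariance of $(i_A,i_{\Hh})$ on $J_{\Hh}$ forces, by a density argument on $C_c(G,J_{\Hh})$, covariance of $(\pi,t)$ on $J_{\Hh\rtimes G}$. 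The universal property of $\Oo_{\Hh\rtimes G}$ then produces
\[\Phi\colon\Oo_{\Hh\rtimes G}\longrightarrow\Oo_{\Hh}\rtimes G,\]
which is surjective because its range is a $C^*$-subalgebra containing $\pi(A\rtimes G)$, $t(\Hh\rtimes G)$, and hence, after taking products and adjoints, the unitaries $u_g$.

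To prove $\Phi$ injective I would use the gauge-invariant uniqueness theorem. The gauge action of $\TT$ on $\Oo_{\Hh}$ commutes with $\alpha$ (on generators, scaling $i_{\Hh}(\xi)$ commutes with $\xi\mapsto\rho(g)\xi$), hence extends to a gauge action on $\Oo_{\Hh}\rtimes G$ fixing $A\rtimes G$ pointwise, and one checks on generators that $\Phi$ intertwines this with the gauge action of $\Oo_{\Hh\rtimes G}$. Also, $\Phi$ restricted to the coefficient algebra $A\rtimes G$ is the canonical inclusion into $\Oo_{\Hh}\rtimes G$, which is injective: $i_A$ is faithful, and amenability identifies full with reduced crossed products, so functoriality of the reduced crossed product applied to the injective $G$-equivariant inclusion $A\hookrightarrow\Oo_{\Hh}$ yields injectivity of $A\rtimes G\to\Oo_{\Hh}\rtimes G$. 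Katsura's gauge-invariant uniqueness theorem now gives that $\Phi$ is injective, hence an isomorphism. (Alternatively, one can construct an explicit inverse by using the multiplier embedding $\xi\mapsto i_{\Hh\rtimes G}(\xi\otimes 1)$ together with the canonical unitaries to produce a covariant representation of $(\Oo_{\Hh},G)$ in $M(\Oo_{\Hh\rtimes G})$, but this still rests on the same ideal identification.)

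I expect the main obstacle to be exactly the compatibility $J_{\Hh\rtimes G}=J_{\Hh}\rtimes G$ of Katsura's ideals with the crossed product: it is what makes $(\pi,t)$ covariant on the potentially larger ideal $J_{\Hh\rtimes G}$, so that $\Phi$ exists at all, and the same circle of exactness facts underlies the faithfulness of the inclusion of coefficient algebras. Amenability is genuinely needed here; without it one is confined to reduced crossed products on one side and the direct analogue of the statement can fail. Everything else reduces to routine, if somewhat lengthy, verifications on the dense subalgebras $C_c(G,\cdot)$.
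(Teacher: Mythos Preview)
The paper does not prove this theorem at all: it is stated as a result of Hao and Ng and simply cited from \cite{HN}, with no argument given. So there is no ``paper's own proof'' to compare your proposal against.

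That said, your outline is essentially the strategy of the original Hao--Ng paper: build a representation $(\pi,t)$ of $\Hh\rtimes G$ in $\Oo_{\Hh}\rtimes G$ out of the universal covariant pair and the canonical unitaries, identify Katsura's ideal $J_{\Hh\rtimes G}$ with $J_{\Hh}\rtimes G$ using exactness of the crossed product for amenable $G$ (this is their key lemma, and you correctly flag it as the heart of the matter), deduce covariance and hence a surjection $\Phi$, and then invoke the gauge-invariant uniqueness theorem for injectivity. One small imprecision: the unitaries $u_g$ lie in the multiplier algebra, not in $\Oo_{\Hh}\rtimes G$ itself, so surjectivity should be phrased as the range containing the dense subalgebra $C_c(G,\Oo_{\Hh})$, which follows since elements $i_A(a)u_g$ and $i_{\Hh}(\xi)u_g$ already lie in the range. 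Otherwise the sketch is sound and faithful to the source.
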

\begin{cor} For $G$ amenable acting on a $C^*$-correspondence ${\mathcal H}$  we have
\[{\mathcal O}_{{\mathcal H}\rtimes G}^{\mathbb T}\cong {\mathcal O}_{\mathcal H}^{\mathbb T}\rtimes G.\]
\end{cor}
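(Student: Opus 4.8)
The plan is to deduce the corollary from the Hao--Ng isomorphism ${\mathcal O}_{{\mathcal H}\rtimes G}\cong {\mathcal O}_{\mathcal H}\rtimes G$ of the preceding theorem, by first showing that this isomorphism is ${\mathbb T}$-equivariant for the appropriate gauge actions, and then showing that taking ${\mathbb T}$-fixed points commutes with the crossed product by the commuting amenable group $G$.

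Since the $G$-action on ${\mathcal O}_{\mathcal H}$ commutes with the gauge action $\gamma$ of ${\mathbb T}$, the action $\gamma$ induces an action $\hat\gamma$ of ${\mathbb T}$ on ${\mathcal O}_{\mathcal H}\rtimes G$, given on the dense subalgebra $C_c(G,{\mathcal O}_{\mathcal H})$ by $(\hat\gamma_z f)(t)=\gamma_z(f(t))$ and acting trivially on the $G$-variable. I would check that under the Hao--Ng isomorphism $\Phi:{\mathcal O}_{{\mathcal H}\rtimes G}\to{\mathcal O}_{\mathcal H}\rtimes G$ the canonical gauge action of ${\mathbb T}$ on ${\mathcal O}_{{\mathcal H}\rtimes G}$ corresponds to $\hat\gamma$. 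This should be a universal-property argument: $\Phi$ carries the canonical copy of the coefficient algebra $A\rtimes G$ in ${\mathcal O}_{{\mathcal H}\rtimes G}$ to the canonical copy of $A\rtimes G$ in ${\mathcal O}_{\mathcal H}\rtimes G$, and the copy of the generating correspondence ${\mathcal H}\rtimes G$ to the closure of $C_c(G,{\mathcal H})$ inside ${\mathcal O}_{\mathcal H}\rtimes G$ (with ${\mathcal H}\subseteq{\mathcal O}_{\mathcal H}$); on these subspaces $\hat\gamma_z$ is the identity on $A\rtimes G$ and multiplication by $z$ on $C_c(G,{\mathcal H})$, because $\gamma_z$ fixes $A$ and scales ${\mathcal H}$ by $z$. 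These are exactly the defining relations for the gauge action on ${\mathcal O}_{{\mathcal H}\rtimes G}$, so by uniqueness of the gauge action $\Phi\gamma_z=\hat\gamma_z\Phi$, whence ${\mathcal O}_{{\mathcal H}\rtimes G}^{\mathbb T}\cong({\mathcal O}_{\mathcal H}\rtimes G)^{\hat\gamma}$.

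It then remains to identify $({\mathcal O}_{\mathcal H}\rtimes G)^{\hat\gamma}$ with ${\mathcal O}_{\mathcal H}^{\mathbb T}\rtimes G$. Because the $G$-action restricts to ${\mathcal O}_{\mathcal H}^{\mathbb T}$ (this is the hypothesis that it commutes with the gauge action) and $G$ is amenable, full and reduced crossed products coincide and the inclusion $C_c(G,{\mathcal O}_{\mathcal H}^{\mathbb T})\hookrightarrow C_c(G,{\mathcal O}_{\mathcal H})$ extends to an embedding ${\mathcal O}_{\mathcal H}^{\mathbb T}\rtimes G\hookrightarrow{\mathcal O}_{\mathcal H}\rtimes G$ whose image lies in $({\mathcal O}_{\mathcal H}\rtimes G)^{\hat\gamma}$. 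For the reverse inclusion, let $E_0=\int_{\mathbb T}\gamma_z\,dz:{\mathcal O}_{\mathcal H}\to{\mathcal O}_{\mathcal H}^{\mathbb T}$ be the faithful conditional expectation coming from the gauge action, and let $E=\int_{\mathbb T}\hat\gamma_z\,dz$ be the faithful conditional expectation of ${\mathcal O}_{\mathcal H}\rtimes G$ onto $({\mathcal O}_{\mathcal H}\rtimes G)^{\hat\gamma}$. On $C_c(G,{\mathcal O}_{\mathcal H})$ one has $(Ef)(t)=E_0(f(t))$, so $E$ maps $C_c(G,{\mathcal O}_{\mathcal H})$ onto $C_c(G,{\mathcal O}_{\mathcal H}^{\mathbb T})$; since $C_c(G,{\mathcal O}_{\mathcal H})$ is dense and $E$ is contractive with range $({\mathcal O}_{\mathcal H}\rtimes G)^{\hat\gamma}$, we get $({\mathcal O}_{\mathcal H}\rtimes G)^{\hat\gamma}=E({\mathcal O}_{\mathcal H}\rtimes G)\subseteq\overline{C_c(G,{\mathcal O}_{\mathcal H}^{\mathbb T})}={\mathcal O}_{\mathcal H}^{\mathbb T}\rtimes G$, hence equality. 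Combined with the previous step this proves ${\mathcal O}_{{\mathcal H}\rtimes G}^{\mathbb T}\cong{\mathcal O}_{\mathcal H}^{\mathbb T}\rtimes G$.

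The main obstacle is the first step, namely tracking the gauge action through the Hao--Ng construction to verify the equivariance of $\Phi$; once that is in place, the rest is the standard fact that fixed points under a compact group commute with crossed products by a commuting group, amenability of $G$ being used only to pass between full and reduced crossed products so that both the embedding ${\mathcal O}_{\mathcal H}^{\mathbb T}\rtimes G\hookrightarrow{\mathcal O}_{\mathcal H}\rtimes G$ and the faithfulness of $E$ are available.
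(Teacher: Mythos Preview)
Your argument is correct and supplies precisely the details that the paper omits: in the paper this corollary is stated without proof, as an immediate consequence of the Hao--Ng theorem. Your two-step outline (gauge equivariance of the Hao--Ng isomorphism, then the standard commutation of $\mathbb T$-fixed points with the crossed product by a commuting amenable group via the averaging expectation) is exactly how one makes that ``immediate'' rigorous, so there is no divergence in approach.

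One small remark on where amenability enters: the faithfulness of the averaging expectation $E=\int_{\mathbb T}\hat\gamma_z\,dz$ holds for any $C^*$-algebra with a continuous $\mathbb T$-action and does not require $G$ amenable. Amenability is really used so that the Hao--Ng theorem applies and so that the equivariant inclusion $\mathcal O_{\mathcal H}^{\mathbb T}\hookrightarrow\mathcal O_{\mathcal H}$ induces an honest embedding $\mathcal O_{\mathcal H}^{\mathbb T}\rtimes G\hookrightarrow\mathcal O_{\mathcal H}\rtimes G$ (this is automatic for reduced crossed products, and amenability lets you identify full with reduced). You might rephrase that last sentence accordingly, but the proof itself is sound.
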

\begin{cor}
Let $G$ be a compact group acting on a topological graph $E$ with $C^*$-correspondence ${\mathcal H}_E$. Then
\[C^*(E)\rtimes G\cong {\mathcal O}_{{\mathcal H}_E\rtimes G}.\]
\end{cor}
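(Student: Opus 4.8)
The plan is to obtain this as a direct specialization of the Hao--Ng theorem stated above. First I would recall the two ingredients that are already in place. On the one hand, by Katsura's construction of topological graph algebras, the graph $C^*$-algebra is exactly the Cuntz--Pimsner algebra of the associated correspondence, i.e. $C^*(E)={\mathcal O}_{{\mathcal H}_E}$, as was noted in the introduction. On the other hand, an action of $G$ on the topological graph $E$ extends to an action of $G$ on the $C_0(E^0)$--$C_0(E^0)$ $C^*$-correspondence ${\mathcal H}_E$ in the sense of Definition~\ref{act}; this was spelled out in Section~1, where the equivariance of the inner product and of the left and right multiplications is the routine verification mentioned there.

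The key step is then immediate: since $G$ is compact it is amenable, so the Hao--Ng theorem applies to the $C^*$-correspondence ${\mathcal H}_E$ over $C_0(E^0)$ and gives
\[
{\mathcal O}_{{\mathcal H}_E\rtimes G}\cong {\mathcal O}_{{\mathcal H}_E}\rtimes G.
\]
Substituting ${\mathcal O}_{{\mathcal H}_E}=C^*(E)$ on the right-hand side yields $C^*(E)\rtimes G\cong {\mathcal O}_{{\mathcal H}_E\rtimes G}$, as claimed. I would also remark that for amenable $G$ the full and reduced crossed products coincide, so the symbol ${\mathcal O}_{{\mathcal H}_E}\rtimes G$ is unambiguous, and that ${\mathcal H}_E\rtimes G$ is a $C_0(E^0)\rtimes G$--$C_0(E^0)\rtimes G$ $C^*$-correspondence, so its Cuntz--Pimsner algebra is formed over the same $C^*$-algebra on both sides, exactly as required by the Hao--Ng statement.

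There is no genuine obstacle here beyond checking that the hypotheses of the Hao--Ng theorem are met: one needs ${\mathcal H}_E$ to be a bona fide $C^*$-correspondence over $C_0(E^0)$, which is Katsura's theorem, and the induced $G$-action to satisfy the three equivariance identities, which holds by construction. It is worth noting, by way of contrast with the main theorem on finite group actions, that this statement requires no row-finiteness, local finiteness, or discreteness assumption on $E$, and that compactness of $G$ is used only through amenability, so the conclusion would remain valid for any amenable locally compact $G$ acting on $E$.
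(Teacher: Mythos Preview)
Your argument is correct and matches the paper's approach: the corollary is stated without proof immediately after the Hao--Ng theorem, and the intended justification is exactly the one you give --- compact implies amenable, so Hao--Ng applies to ${\mathcal H}_E$, and ${\mathcal O}_{{\mathcal H}_E}=C^*(E)$. Your additional remarks on full versus reduced crossed products and on the unnecessary hypotheses are accurate elaborations but go beyond what the paper records.
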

\begin{example}Let $G$ be a compact group and let ${\mathcal E}$ be a Hermitian vector bundle over a locally compact space $X$ such that $G$ acts on both ${\mathcal E}$ and $X$ in a compatible way (see \cite{A}, section 1.6). Such a vector bundle is called a $G$-vector bundle, generalizing both ordinary vector bundles (when $G$ is trivial) and $G$-modules (when $X$ reduces to a point). The set of sections $\Gamma({\mathcal E})$ becomes in the usual way a  $C^*$-correspondence over $C_0(X)$, and the group $G$ acts on $\Gamma({\mathcal E})$. In particular, $G$ acts on its Cuntz-Pimsner algebra, which is a continuous field of Cuntz algebras, see \cite{V}.
\end{example}
\begin{example} Let $G$ be compact and let $\rho:G\to U(n)$ be a unitary representation. This determines an action of $G$ on  ${\mathcal O}_{\mathcal H}\cong {\mathcal O}_n$ where ${\mathcal H}={\mathbb C}^n$ and a product type action $\ds  \bigotimes _1^\infty Ad \rho$ on $\ds {\mathcal O}_{\mathcal H}^{\mathbb T}\cong \ds \bigotimes_1^\infty M_n\cong M_{n^\infty}$(see \cite{HR1}). We obtain the isomorphisms
\[{\mathcal O}_{{\mathcal H}\rtimes G}\cong {\mathcal O}_n\rtimes G,\;\; {\mathcal O}_{{\mathcal H}\rtimes G}^{\mathbb T}\cong M_{n^\infty}\rtimes G.\]
\end{example}
\begin{rmk}
Let $G$ be a compact group and let  $E$ be a finite graph with $C^*$-correspondence ${\mathcal H}_E$. If $G$ acts on ${\mathcal H}_E$, using the universal property we obtain an action of $G$ on $C^*(E)$. Since the action  commutes with the gauge action, we get  an action on the core algebra $C^*(E)^{\mathbb T}\cong \varinjlim A_n$, where  $A_n$ have dimension $m_n$. For a locally representable action as in  \cite{HR1, HR2}, $K_0(C^*(E)^{\mathbb T}\rtimes G)$ is the inductive limit of $K_0(A_n\rtimes G)\cong K_0(G)^{m_n}$, where the inclusion maps are determined by matrices with entries in the representation ring $K_0(G)\cong R(G)$.
\end{rmk}
\begin{rmk}
Note that some actions which permute vertices in a graph with more than one vertex may not induce locally representable actions on the core algebra (see Example \ref{toeplitz}).
\end{rmk}

\section{Graphs of $C^*$-correspondences and applications to finite groups actions on discrete graphs}

Given a discrete graph $E=(E^0,E^1,r,s)$, associate to each vertex $v\in E^0$ a $C^*$-algebra $A_v$ and to each edge $e\in E^1$ a nondegenerate $A_{r(e)}$--$A_{s(e)}$ $C^*$-correspondence ${\mathcal H}_e$. This way we obtain an $E$-system   of $C^*$-correspondences or a graph of $C^*$-correspondences. The $C^*$-algebra associated to this graph of $C^*$-correspondences is ${\mathcal O}_{\mathcal H}$, where $ {\mathcal H}=\bigoplus_{e\in E^1}{\mathcal H}_e$ becomes a $C^*$-correspondence over $A=\bigoplus_{v\in E^0}A_v$ in a natural way.  For more information, see \cite{DKPS}, where we discuss systems of $C^*$-correspondences over $k$-graphs $\Lambda$ and we construct a Fell bundle over the path groupoid ${\mathcal G}_\Lambda$ such that its reduced cross-sectional algebra is isomorphic to the $C^*$-algebra of the $\Lambda$-system. Unlike in \cite {DKPS}, here we allow graphs with sources and $C^*$-correspondences which are not full.

\begin{example}
Given a discrete graph $E$, associate to each vertex the $C^*$-algebra ${\mathbb C}$ and to each edge the $C^*$-correspondence ${\mathbb C}$. This is a graph of $C^*$-correspondences with associated $C^*$-algebra isomorphic to $C^*(E)$.
\end{example}

\begin{example}
Let $E$ have one vertex $v$ and one loop $e$, and let $A_v={\mathbb C}$, ${\mathcal H}_e={\mathbb C}^n$. Then the $C^*$-algebra of this graph of $C^*$-correspondences is ${\mathcal O}_n$. If $A_v=A$ is any $C^*$-algebra and ${\mathcal H}_e={\mathcal H}$ is a $C^*$-correspondence over $A$, then we get ${\mathcal O}_{\mathcal H}$.
\end{example}

\begin{example}
Consider a $C^*$-correspondence ${\mathcal H}$ over a unital $C^*$-algebra $A$ such that $A$ decomposes into a direct sum $A_1\oplus A_2\oplus\cdots \oplus A_n$.
If $p_j$ is the identity of $A_j$, then ${\mathcal H}$ decomposes into a direct sum $\ds\bigoplus_{i,j} p_i{\mathcal H}p_j$ and we can construct a graph of $C^*$-correspondences with $n$ vertices $\{v_1,...,v_n\}$, by assigning  the $C^*$-algebra $A_i$ at $v_i$ and the $A_i$--$A_j$ $C^*$-correspondence $p_i{\mathcal H}p_j\neq 0$ at an edge joining $v_j$ with $v_i$. If some of these correspondences are trivial, there is no edge between the corresponding vertices.
\end{example}
Recall that if a finite group $G$ acts on a finite or countable set $X$, then $C_0(X)\rtimes G$ decomposes as a direct sum of crossed products $C(Gx)\rtimes G$ over the orbit space $X/G$. Since the action on each orbit $Gx$ is transitive, this orbit can be identified with the homogeneous space $G/G_x$, where $G_x$ is the stabilizer group and $G$ acts on $G/G_x$ by left translation. Moreover, \[C(G/G_x)\rtimes G\cong M_{|Gx|}\otimes C^*(G_x),\]  which is isomorphic to a finite direct sum of matrix algebras.
\begin{cor} If a finite group $G$ acts on a discrete graph $E$, then
\[C_0(E^0)\rtimes G\cong \bigoplus_{E^0/G} M(Gv),\]
where $M(Gv)$ is a finite direct sum of matrix algebras. In particular, $C_0(E^0)\rtimes G$ is strongly Morita equivalent (SME) to a direct sum of finite dimensional abelian $C^*$-algebras.
\end{cor}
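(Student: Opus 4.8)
The plan is to decompose $C_0(E^0)\rtimes G$ orbit by orbit and then quote the two facts recalled immediately before the statement. First I would use that $E^0$ is discrete, so $C_0(E^0)=c_0(E^0)$ splits as the $c_0$-direct sum $\bigoplus_{Gv\in E^0/G}C(Gv)$ of the $G$-invariant ideals attached to the (finite, since $G$ is finite) orbits. The first real step is to check that the crossed product by the finite group $G$ commutes with such a $c_0$-direct sum, i.e. $C_0(E^0)\rtimes G\cong\bigoplus_{E^0/G}\bigl(C(Gv)\rtimes G\bigr)$; for finite $G$ this follows either by a direct computation on generators or from the universal property, since a covariant representation of $\bigoplus_i B_i$ decomposes as a direct sum of covariant representations of the summands.

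Next I would fix a base vertex in each orbit and use the $G$-equivariant identification $Gv\cong G/G_v$, where $G_v$ is the stabilizer and $G$ acts by left translation, to get $C(Gv)\rtimes G\cong C(G/G_v)\rtimes G$. Then I would invoke the isomorphism recalled above (a special case of Green's imprimitivity theorem) $C(G/G_v)\rtimes G\cong M_{|Gv|}\otimes C^*(G_v)$, together with the Peter--Weyl decomposition $C^*(G_v)\cong\bigoplus_{\pi\in\widehat{G_v}}M_{d_\pi}$ of the group algebra of the finite group $G_v$. Combining these gives $C(Gv)\rtimes G\cong\bigoplus_{\pi\in\widehat{G_v}}M_{|Gv|\,d_\pi}$, a finite direct sum of matrix algebras; this is the algebra $M(Gv)$ in the statement. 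Assembling over $E^0/G$ yields the claimed isomorphism.

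For the ``in particular'', I would recall that $M_k$ is strongly Morita equivalent to $\mathbb{C}$ via the imprimitivity bimodule $\mathbb{C}^k$, and then take the direct sum of these bimodules over the (at most countably many) matrix blocks occurring in $\bigoplus_{E^0/G}M(Gv)$ to obtain an imprimitivity bimodule implementing a strong Morita equivalence between $C_0(E^0)\rtimes G$ and a $c_0$-direct sum of finite-dimensional abelian $C^*$-algebras, with one copy of $\mathbb{C}$ for each block $M_{|Gv|\,d_\pi}$.

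I do not anticipate a serious obstacle: the substance is contained in the two facts recalled just before the statement. The only points needing a little care are the commutation of the crossed product with an infinite $c_0$-direct sum (so that no completion issues intervene) and the coherent assembly of the block-wise Morita equivalences into a single bimodule over a possibly infinite orbit space.
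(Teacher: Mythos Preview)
Your proposal is correct and follows essentially the same approach as the paper: the corollary is stated there without a separate proof, as an immediate consequence of the facts recalled in the preceding paragraph (the orbit decomposition of $C_0(X)\rtimes G$ and the isomorphism $C(G/G_x)\rtimes G\cong M_{|Gx|}\otimes C^*(G_x)$). Your write-up simply makes explicit the Peter--Weyl decomposition of $C^*(G_v)$ and the block-wise Morita equivalence, which the paper leaves implicit.
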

To describe the crossed product $C^*(E)\rtimes G$, we first consider the case when $C_0(E^0)\rtimes G$ is abelian.

\begin{prop} Suppose $C_0(E^0)\rtimes G=C_0(V)$ with $V$ finite or countable, and denote by $\{p_t\}_{t\in V}$ the minimal projections in $C_0(V)$.
The isomorphism classes of separable nondegenerate $C^*$-correspondences ${\mathcal H}$ over $C_0(V)$ with  $*$-homomorphism $\varphi: C_0(V)\to {\mathcal L}({\mathcal H})$ correspond to matrices $(a_{st})_{s,t\in V}$ where $a_{st}$ are nonnegative integer entries or $a_{st}=\infty$. More precisely, $a_{st}=\dim \varphi(p_s){\mathcal H}p_t.$
\end{prop}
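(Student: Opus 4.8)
The plan is to show that the two families of orthogonal projections $\{\varphi(p_s)\}_{s\in V}$ (acting on the left) and $\{\text{right multiplication by }p_t\}_{t\in V}$ decompose ${\mathcal H}$ into ``matrix blocks'' $H_{st}$, each of which is nothing but a Hilbert space, and that the family of dimensions $a_{st}=\dim H_{st}$ is a complete isomorphism invariant.

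First I would establish the block decomposition. Every Hilbert $C_0(V)$-module is nondegenerate on the right (an approximate unit of $C_0(V)$ acts approximately as the identity), so the finite partial sums $\sum_{t\in F}p_t$ act on ${\mathcal H}$ by adjointable projections converging strictly to $1$, giving ${\mathcal H}=\bigoplus_{t\in V}{\mathcal H}p_t$ as an internal Hilbert-module direct sum. Since ${\mathcal H}$ is a \emph{nondegenerate} correspondence, the same is true on the left: $\varphi\bigl(\sum_{t\in F}p_t\bigr)\to 1$ strictly, so ${\mathcal H}=\bigoplus_{s\in V}\varphi(p_s){\mathcal H}$. The projections $\varphi(p_s)$ are adjointable and hence right $C_0(V)$-linear, so they commute with right multiplication by $p_t$; therefore the two decompositions are compatible and
\[
{\mathcal H}=\bigoplus_{s,t\in V}H_{st},\qquad H_{st}:=\varphi(p_s)\,{\mathcal H}\,p_t .
\]
For $\xi,\eta\in H_{st}$ one has $\langle\xi,\eta\rangle=p_s\langle\xi,\eta\rangle p_t\in p_tC_0(V)p_t\cong{\mathbb C}$, while $p_s$ acts on the left and $p_t$ on the right as the identity and all other minimal projections act as $0$; thus $H_{st}$ is, as a $C_0(V)$--$C_0(V)$ correspondence, completely determined by its Hilbert-space dimension. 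When ${\mathcal H}$ is separable each closed subspace $H_{st}$ is separable, so $a_{st}\in\{0,1,2,\dots\}\cup\{\infty\}$ with $\infty=\aleph_0$.

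Next I would check that $(a_{st})$ is an isomorphism invariant and is realized by some ${\mathcal H}$. If $U\colon{\mathcal H}\to{\mathcal H}'$ is an isomorphism of $C_0(V)$-correspondences, then $U$ is right $C_0(V)$-linear and $U\varphi(p_s)=\varphi'(p_s)U$, so $U$ restricts to a Hilbert-space isomorphism $H_{st}\to H'_{st}$ and $a_{st}=a'_{st}$. Conversely, two separable nondegenerate correspondences with the same matrix are isomorphic: choosing Hilbert-space unitaries $u_{st}\colon H_{st}\to H'_{st}$, the map $\xi\mapsto(u_{st}\,\xi_{st})$ preserves the $C_0(V)$-valued inner product and the bimodule structure, hence is a correspondence isomorphism. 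Finally, given an arbitrary matrix $(a_{st})$ I would exhibit a model: take Hilbert spaces $H_{st}$ of dimension $a_{st}$ and let ${\mathcal H}$ be the Hilbert-module completion of the algebraic direct sum of the $H_{st}$ with $\langle\xi,\eta\rangle(t)=\sum_s\langle\xi_{st},\eta_{st}\rangle$, right action $(\xi f)_{st}=f(t)\xi_{st}$ and left action $(\varphi(f)\xi)_{st}=f(s)\xi_{st}$; one checks routinely that this is a nondegenerate $C^*$-correspondence over $C_0(V)$ with $\dim\varphi(p_s){\mathcal H}p_t=a_{st}$. These three facts give the asserted bijection.

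The step I expect to be the main technical point is keeping the infinite direct sums under control: verifying that the left block decomposition exhausts ${\mathcal H}$ (this is precisely where the nondegeneracy hypothesis on $\varphi$ enters — without it $\varphi(p_s)$ would only sum to a proper projection), that the model correspondence is complete in the Hilbert-module norm, i.e. equals the $c_0$-type direct sum $\{(\xi_{st}):\sum_s\|\xi_{st}\|^2\to 0 \text{ as } t\to\infty\}$ rather than an $\ell^2$- or algebraic sum, and that the reconstructed isomorphism in the converse respects this completion.
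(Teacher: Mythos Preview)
Your argument is correct and complete. The paper, however, does not give a direct proof at all: its entire proof reads ``See Theorem 1.1 in \cite{KPQ}.'' So your approach differs from the paper's in that you actually prove the statement rather than outsourcing it; what you have written is essentially the content of the cited result of Kaliszewski--Patani--Quigg, reconstructed from first principles. The payoff of your route is a self-contained argument; the paper's route keeps the exposition short by deferring to the literature.

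One small slip worth fixing: in the line ``For $\xi,\eta\in H_{st}$ one has $\langle\xi,\eta\rangle=p_s\langle\xi,\eta\rangle p_t$'', the projection $p_s$ should be $p_t$. The left action $\varphi(p_s)$ does not enter the right-$C_0(V)$-valued inner product; what you actually use is $\langle\xi p_t,\eta p_t\rangle=p_t\langle\xi,\eta\rangle p_t\in p_tC_0(V)p_t\cong{\mathbb C}$. This does not affect the rest of the argument.
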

\begin{proof}See Theorem 1.1 in \cite{KPQ}.
\end{proof}
\begin{cor} If $C_0(E^0)\rtimes G=C_0(V)$ is abelian, then $C^*(E)\rtimes G$ is the graph algebra with incidence  matrix $(a_{st})_{s,t\in V}$.
It is also the $C^*$-algebra of a graph of $C^*$-correspondences where the vertex algebras are ${\mathbb C}$ (one for each vertex $t\in V$) and the $C^*$-correspondences are  Hilbert spaces of dimension $a_{st}$.
\end{cor}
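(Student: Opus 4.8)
The plan is to combine two ingredients already available. On one hand, since a finite group is compact and a discrete graph is a topological graph, the earlier corollary (a consequence of the Hao--Ng theorem) gives $C^*(E)\rtimes G\cong{\mathcal O}_{{\mathcal H}_E\rtimes G}$, so everything reduces to understanding the $C^*$-correspondence ${\mathcal H}_E\rtimes G$ over $C_0(E^0)\rtimes G=C_0(V)$. On the other hand, the preceding Proposition classifies $C^*$-correspondences over an abelian algebra $C_0(V)$ by their matrices of dimensions; combining it with the fact that the Cuntz--Pimsner algebra of a graph correspondence is the corresponding graph $C^*$-algebra then yields the statement.

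First I would note that, since $G$ acts on the $C^*$-correspondence ${\mathcal H}_E$ over $C_0(E^0)$, the crossed product ${\mathcal H}_E\rtimes G$ is a separable nondegenerate $C^*$-correspondence over $C_0(V)$ carrying a $*$-homomorphism $\varphi:C_0(V)\to{\mathcal L}({\mathcal H}_E\rtimes G)$ (the integrated form of the covariant representation of $(C_0(E^0),G)$). Applying the Proposition with $a_{st}:=\dim\varphi(p_s)({\mathcal H}_E\rtimes G)p_t$, I would then record the orthogonal decomposition \[{\mathcal H}_E\rtimes G=\bigoplus_{s,t\in V}{\mathcal H}_{st},\qquad {\mathcal H}_{st}:=\varphi(p_s)({\mathcal H}_E\rtimes G)p_t,\] where each ${\mathcal H}_{st}$ is a ${\mathbb C}$--${\mathbb C}$ correspondence, that is, a Hilbert space of dimension $a_{st}$.

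Next I would interpret this decomposition as exhibiting ${\mathcal H}_E\rtimes G$ as the correspondence $\bigoplus_e{\mathcal H}_e$ attached to the graph of $C^*$-correspondences with vertex set $V$, vertex algebra ${\mathbb C}$ at each vertex, and, for each ordered pair $(s,t)$, exactly $a_{st}$ edges with source $t$ and range $s$, each carrying the correspondence ${\mathbb C}$ (so that summing these $a_{st}$ copies of ${\mathbb C}$ recovers ${\mathcal H}_{st}$). By definition, the $C^*$-algebra of this graph of $C^*$-correspondences is ${\mathcal O}_{{\mathcal H}_E\rtimes G}\cong C^*(E)\rtimes G$, which is the second assertion. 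For the first assertion I would invoke the first example of this section: when all vertex algebras and all edge correspondences equal ${\mathbb C}$, the $C^*$-algebra of the graph of $C^*$-correspondences is just the graph $C^*$-algebra of the underlying graph $F$, and $F$ has vertex set $V$ and incidence matrix $(a_{st})$, whence $C^*(E)\rtimes G\cong C^*(F)$.

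Every step here is a citation or an unwinding of definitions, so I do not expect a genuine obstacle; the one place that requires real checking is that ${\mathcal H}_E\rtimes G$ meets the hypotheses of the Proposition --- separability, nondegeneracy as a right Hilbert $C_0(V)$-module, and the existence of the left action $\varphi$ --- and, in concrete cases, the actual computation of $a_{st}=\dim\varphi(p_s)({\mathcal H}_E\rtimes G)p_t$ in terms of the $G$-orbits on $E^0$, $E^1$ and the characters of the stabilizer subgroups, which is precisely what the examples that follow carry out. I would also remark that local finiteness of $E$ plays no role here: if some $a_{st}=\infty$, the graph $F$ is merely not row finite, and the identifications above persist since graph $C^*$-algebras and their descriptions as Cuntz--Pimsner algebras are available for arbitrary graphs.
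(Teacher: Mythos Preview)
Your proposal is correct and follows exactly the route the paper intends: the corollary is stated without proof in the paper, as an immediate consequence of the preceding Proposition (classifying correspondences over $C_0(V)$ by dimension matrices) combined with the Hao--Ng corollary $C^*(E)\rtimes G\cong{\mathcal O}_{{\mathcal H}_E\rtimes G}$ and Example~5.1 identifying the trivial graph-of-correspondences algebra with the ordinary graph algebra. Your additional remarks on verifying the hypotheses and on local finiteness are sound but go beyond what the paper spells out.
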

\begin{prop} Suppose $A$ and $B$ are SME $C^*$-algebras with $A$--$B$ imprimitivity bimodule ${\mathcal X}$. If ${\mathcal H}$ is a $C^*$-correspondence over $A$, then ${\mathcal H'}={\mathcal X}^*\otimes_A{\mathcal H}\otimes _A{\mathcal X}$ is a $C^*$-correspondence over $B$ such that ${\mathcal O}_{\mathcal H}$ and ${\mathcal O}_{\mathcal H'}$ are SME.
\end{prop}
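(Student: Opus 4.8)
The plan is to exhibit ${\mathcal O}_{\mathcal H}$ and ${\mathcal O}_{\mathcal H'}$ as complementary full corners of a single Cuntz-Pimsner algebra built over the linking algebra of ${\mathcal X}$. First I would check that ${\mathcal H'}={\mathcal X}^*\otimes_A{\mathcal H}\otimes_A{\mathcal X}$ really is a nondegenerate $C^*$-correspondence over $B$: the dual module ${\mathcal X}^*$ is a $B$--$A$ imprimitivity bimodule, ${\mathcal H}$ carries both a left and a right $A$-action so the interior tensor products over $A$ are defined, and the resulting $B$-valued inner product and $B$--$B$ bimodule structure are the standard ones; nondegeneracy of the left $B$-action follows from nondegeneracy of the left $A$-action on ${\mathcal H}$ together with $\overline{B\cdot{\mathcal X}^*}={\mathcal X}^*$. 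I would also record the canonical isomorphisms ${\mathcal X}\otimes_B{\mathcal X}^*\cong A$ and ${\mathcal X}^*\otimes_A{\mathcal X}\cong B$, which give an isomorphism ${\mathcal X}\otimes_B{\mathcal H'}\cong{\mathcal H}\otimes_A{\mathcal X}$ of $A$--$B$ correspondences; in other words, ${\mathcal H}$ and ${\mathcal H'}$ are Morita equivalent correspondences with implementing bimodule ${\mathcal X}$.

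Next I would form the linking algebra $L=L({\mathcal X})$ (the $C^*$-algebra of $2\times 2$ matrices whose $(1,1)$-entry lies in $A$, $(2,2)$-entry in $B$, $(1,2)$-entry in ${\mathcal X}$ and $(2,1)$-entry in ${\mathcal X}^*$), with multiplier projection $p$ satisfying $pLp=A$, $(1-p)L(1-p)=B$, $pL(1-p)={\mathcal X}$; both $p$ and $1-p$ are full because ${\mathcal X}$ is full on either side. Regarding $Lp$ as an $L$--$A$ imprimitivity bimodule with dual $pL$, I would induce ${\mathcal H}$ to a $C^*$-correspondence ${\mathcal W}:=Lp\otimes_A{\mathcal H}\otimes_A pL$ over $L$, which is nondegenerate because ${\mathcal H}$ is. Since $pLp=A$ and $(1-p)Lp={\mathcal X}^*$, compressing by $p$ and $1-p$ yields $p{\mathcal W}p\cong{\mathcal H}$ as a correspondence over $A$ and $(1-p){\mathcal W}(1-p)\cong{\mathcal X}^*\otimes_A{\mathcal H}\otimes_A{\mathcal X}={\mathcal H'}$ as a correspondence over $B$.

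The core of the argument is the claim that compression by these full corners commutes with the Cuntz-Pimsner construction: $p{\mathcal O}_{\mathcal W}p\cong{\mathcal O}_{p{\mathcal W}p}={\mathcal O}_{\mathcal H}$ and $(1-p){\mathcal O}_{\mathcal W}(1-p)\cong{\mathcal O}_{\mathcal H'}$. Granting this, ${\mathcal O}_{\mathcal H}$ and ${\mathcal O}_{\mathcal H'}$ are complementary full corners of ${\mathcal O}_{\mathcal W}$ (note $L$, hence $p$, is full in ${\mathcal O}_{\mathcal W}$), and are therefore strongly Morita equivalent via the bimodule $p{\mathcal O}_{\mathcal W}(1-p)$. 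To prove the claim I would run a gauge-invariant uniqueness argument: $p{\mathcal O}_{\mathcal W}p$ carries the restriction of the gauge action of ${\mathbb T}$ on ${\mathcal O}_{\mathcal W}$, the generators of ${\mathcal O}_{\mathcal W}$ compressed by $p$ form a Cuntz-Pimsner covariant representation of ${\mathcal H}$ whose coefficient homomorphism is faithful on $A=pLp$, so the universal property gives a ${\mathbb T}$-equivariant surjection ${\mathcal O}_{\mathcal H}\to p{\mathcal O}_{\mathcal W}p$, and injectivity follows from the gauge-invariant uniqueness theorem.

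The step I expect to be the main obstacle is precisely this interchange of ``full corner'' and ``${\mathcal O}_{(-)}$''. One must verify that the projection $p$ is compatible with the left action of $L$ on ${\mathcal W}$ and with Katsura's ideal $J_{\mathcal H}$, so that Cuntz-Pimsner covariance for ${\mathcal H}$ downstairs matches covariance for ${\mathcal W}$ restricted to $p{\mathcal O}_{\mathcal W}p$ — this is where allowing the left action of $A$ on ${\mathcal H}$ to be non-injective requires genuine care, rather than just the simpler setting where $A\hookrightarrow{\mathcal L}({\mathcal H})$ — and that fullness of $p$ forces the compression map to be injective. Alternatively, one can bypass the linking-algebra bookkeeping and invoke the known theorem that Morita equivalent $C^*$-correspondences have strongly Morita equivalent Cuntz-Pimsner algebras, applied to the equivalence ${\mathcal X}\otimes_B{\mathcal H'}\cong{\mathcal H}\otimes_A{\mathcal X}$ established in the first step.
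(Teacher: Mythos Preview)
Your proposal is correct, and the alternative you mention in your final sentence is in fact exactly what the paper does. The paper's entire proof is: set ${\mathcal R}={\mathcal H}\otimes_A{\mathcal X}$ and ${\mathcal S}={\mathcal X}^*$, observe that ${\mathcal R}\otimes_B{\mathcal S}\cong{\mathcal H}$ and ${\mathcal S}\otimes_A{\mathcal R}\cong{\mathcal H'}$, and then cite the Muhly--Pask--Tomforde strong shift equivalence theorem (with Muhly--Solel as a secondary reference) to conclude that ${\mathcal O}_{\mathcal H}$ and ${\mathcal O}_{\mathcal H'}$ are strongly Morita equivalent. Your isomorphism ${\mathcal X}\otimes_B{\mathcal H'}\cong{\mathcal H}\otimes_A{\mathcal X}$ is the Muhly--Solel formulation of the same equivalence datum.

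Your primary route via the linking algebra is a genuinely different, more hands-on argument. It is essentially an unpacking of what the cited theorems prove internally: the MPT proof builds a ``bipartite inflation'' correspondence over $A\oplus B$ that plays the same role as your ${\mathcal W}$ over $L({\mathcal X})$, and then identifies the two Cuntz--Pimsner algebras as complementary full corners. What your approach buys is an explicit imprimitivity bimodule $p\,{\mathcal O}_{\mathcal W}(1-p)$ and a self-contained argument (modulo gauge-invariant uniqueness), at the cost of having to verify the corner--Cuntz--Pimsner interchange $p\,{\mathcal O}_{\mathcal W}p\cong{\mathcal O}_{p{\mathcal W}p}$, including the compatibility of $p$ with Katsura's ideal that you correctly flag. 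The paper's approach buys brevity: it reduces the proposition to a single citation, which is appropriate since the proposition is only a tool for the subsequent corollary on $C^*(E)\rtimes G$.
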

\begin{proof} Let ${\mathcal R}={\mathcal H}\otimes _A{\mathcal X}$ and let ${\mathcal S}={\mathcal X}^*$. Then ${\mathcal R}\otimes_B{\mathcal S}\cong {\mathcal H}, {\mathcal S}\otimes_A{\mathcal R}\cong {\mathcal H'}$, so by a theorem in \cite{MPT} (see also \cite{MS}), we get that ${\mathcal O}_{\mathcal H}$ and ${\mathcal O}_{\mathcal H'}$ are SME.
\end{proof}

\begin{cor}
 Given a discrete locally finite graph $E=(E^0, E^1, r,s)$ and a finite group $G$ acting on $E$, the crossed product $C^*(E)\rtimes G$ is  SME to a locally finite graph $C^*$-algebra, where the number of vertices is the cardinality of the spectrum of $C_0(E^0)\rtimes G$.
 In particular, the $K$-theory of $C^*(E)\rtimes G$ and of $C^*(E)^{\mathbb T}\rtimes G$ can be computed if we determine the incidence matrix of the graph.
 \end{cor}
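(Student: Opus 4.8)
The plan is to assemble the results of the previous two sections. Since a discrete graph is in particular a topological graph and a finite group is compact, the Corollary giving $C^*(E)\rtimes G\cong\mathcal{O}_{\mathcal{H}_E\rtimes G}$ applies, so $C^*(E)\rtimes G$ is the Cuntz--Pimsner algebra of the crossed product correspondence $\mathcal{H}_E\rtimes G$ over $A:=C_0(E^0)\rtimes G$. By the Corollary describing $C_0(E^0)\rtimes G$, we have $A\cong\bigoplus_{E^0/G}M(Gv)$, a direct sum over the orbit space of finite direct sums of matrix algebras $M(Gv)\cong M_{|Gv|}\otimes C^*(G_v)$, so $A$ is SME to the commutative $C^*$-algebra $B:=C_0(V)$, where $V=\bigsqcup_{Gv\in E^0/G}\widehat{G_v}$ is the spectrum of $A$. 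Note $V$ is at most countable, since $E^0$ is countable and each stabilizer $G_v$ is finite. Fix an $A$--$B$ imprimitivity bimodule $\mathcal{X}$.

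Next I would transport the correspondence across this equivalence. By the Proposition on Morita-equivalent correspondences, $\mathcal{H}':=\mathcal{X}^*\otimes_A(\mathcal{H}_E\rtimes G)\otimes_A\mathcal{X}$ is a nondegenerate $C^*$-correspondence over $B=C_0(V)$ with $\mathcal{O}_{\mathcal{H}_E\rtimes G}$ SME to $\mathcal{O}_{\mathcal{H}'}$. Since $V$ is discrete, the Proposition classifying correspondences over $C_0(V)$ shows $\mathcal{H}'$, together with its left action, is determined by the matrix $(a_{st})_{s,t\in V}$, $a_{st}=\dim\varphi(p_s)\mathcal{H}'p_t$, and the Corollary identifying $\mathcal{O}_{\mathcal{H}'}$ with a graph algebra gives $\mathcal{O}_{\mathcal{H}'}\cong C^*(F)$, where $F$ has vertex set $V$ and incidence matrix $(a_{st})$. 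Composing, $C^*(E)\rtimes G$ is SME to $C^*(F)$ and $|F^0|=|V|$ equals the cardinality of the spectrum of $C_0(E^0)\rtimes G$, as claimed.

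It remains to check that $F$ is locally finite, i.e.\ that every row and column sum of $(a_{st})$ is finite. The idea is to compute the corners $\varphi(p_s)\mathcal{H}'p_t$ from the orbit data: restricting $\mathcal{H}_E\rtimes G$ to the block of $A$ indexed by a pair of orbits $(Gv,Gw)$ and conjugating by $\mathcal{X}$, one finds that for $s\in\widehat{G_v}$, $t\in\widehat{G_w}$ the entry $a_{st}$ is a finite sum, over the edges $e$ of $E$ with $r(e)\in Gv$ and $s(e)\in Gw$, of multiplicities of $s$ and $t$ in certain representations induced from the stabilizers, all finite because $G$ is. Row finiteness of $E$ (finitely many edges into each vertex) bounds $\sum_s a_{st}$, and local finiteness (finitely many edges out of each vertex) bounds $\sum_t a_{st}$; hence $F$ is locally finite — each $\varphi(p_s)\mathcal{H}'$ and each $\mathcal{H}'p_t$ is finite-dimensional — and $C^*(F)$ above is genuinely a locally finite graph algebra. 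I expect this bookkeeping — identifying $(a_{st})$ precisely enough with the edge-orbit and stabilizer-character data to be sure all the row and column sums are finite — to be the main point; the rest is assembly of cited results.

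Finally, for the $K$-theory: SME preserves $K$-theory, so $K_*(C^*(E)\rtimes G)\cong K_*(C^*(F))$, computed for the locally finite graph $F$ as the kernel and cokernel of $I-A_F^{t}$ on $\bigoplus_V\mathbb{Z}$ via the six-term exact sequence for graph algebras. For the gauge-fixed algebras, the $G$-action on $C^*(E)$ commutes with the gauge action, so the Corollary on core algebras gives $(C^*(E)\rtimes G)^{\mathbb{T}}\cong C^*(E)^{\mathbb{T}}\rtimes G$; moreover $\mathcal{X}$ is supported in gauge-degree zero, so the equivalence of the middle step is gauge-equivariant and restricts to one between $C^*(E)^{\mathbb{T}}\rtimes G$ and the AF algebra $C^*(F)^{\mathbb{T}}$, whose $K_0$ is $\varinjlim(\bigoplus_V\mathbb{Z},A_F^{t})$. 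So both $K$-theories are determined by the incidence matrix $(a_{st})$.
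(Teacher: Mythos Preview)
Your proposal is correct and follows essentially the same route as the paper: the paper's proof is a single line, ``We apply the Proposition with $A=C_0(E^0)\rtimes G$ and $B=C_0(V)$,'' and you have unpacked exactly that application, together with the earlier Corollaries identifying $C^*(E)\rtimes G\cong\mathcal{O}_{\mathcal{H}_E\rtimes G}$ and $C_0(E^0)\rtimes G$ as SME to a commutative algebra, and the classification of correspondences over $C_0(V)$. Your additional discussion of local finiteness of $F$ and of the $K$-theory consequences goes beyond what the paper writes out, but is in the intended spirit and is sound.
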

 \begin{proof}
 We apply the Proposition with $A=C_0(E^0)\rtimes G$ and $B=C_0(V)$.
 \end{proof}
\begin{thm}
Given a discrete locally finite graph $E=(E^0, E^1, r,s)$ and a finite group $G$ acting on $E$, the crossed product $C^*(E)\rtimes G$ is isomorphic to the $C^*$-algebra of a  graph of (minimal) $C^*$-correspondences, where at each vertex $v$ we associate a matrix algebra $M_{n(v)}$ and at each edge joining $w$ and $v$ we associate $M_{n(v),n(w)}$, the set of rectangular matrices with $n(v)$ rows and $n(w)$ columns.
\end{thm}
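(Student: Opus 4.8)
The plan is to realize $C^*(E)\rtimes G$ as a Cuntz--Pimsner algebra over a $C^*$-algebra that is a direct sum of full matrix algebras, and then to split the corresponding correspondence into minimal blocks. Since a finite group is compact and amenable, the corollary stating $C^*(E)\rtimes G\cong\Oo_{\Hh_E\rtimes G}$ reduces the problem to analysing the correspondence $\Hh:=\Hh_E\rtimes G$ over $A:=C_0(E^0)\rtimes G$. By the description of $C_0(E^0)\rtimes G$ recalled above, and because $G$ is finite, $A\cong\bigoplus_{E^0/G}M_{|Gv|}\otimes C^*(G_v)$ with each $C^*(G_v)\cong\bigoplus_{\chi\in\widehat{G_v}}M_{d_\chi}$ finite-dimensional; hence $A$ is a ($c_0$-)direct sum of full matrix algebras, $A=\bigoplus_{i\in I}M_{n_i}$, where $I$ is the (countable) set of pairs $(Gv,\chi)$, the $p_i$ are the minimal central projections, and $n_{(Gv,\chi)}=|Gv|\,d_\chi$; these $n_i$ are the numbers $n(v)$ of the statement.

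Next I would decompose $\Hh$ along this block structure, as in the example treating a correspondence over $A=A_1\oplus\cdots\oplus A_n$, extended in the obvious way to countable direct sums. Since the $p_i$ are central with $\sum_i p_i\to 1$ strictly, and the left action of $A$ on $\Hh$ is nondegenerate (this is inherited from $\Hh_E$, whose left action is nondegenerate, and is preserved by the crossed product), one gets $\Hh=\bigoplus_{i,j\in I}p_i\Hh p_j$ as a $C^*$-correspondence over $A$, with each $p_i\Hh p_j$ a nondegenerate $M_{n_i}$--$M_{n_j}$ correspondence (the unit $p_i$ of $M_{n_i}$ acts as the identity on $p_i\Hh p_j$). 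This is precisely a graph of $C^*$-correspondences: vertex set $I$, vertex algebra $M_{n_i}$ at $i$, and, for each pair $(i,j)$ with $p_i\Hh p_j\neq 0$, an edge from $j$ to $i$ carrying $p_i\Hh p_j$; its associated $C^*$-algebra is $\Oo_\Hh\cong C^*(E)\rtimes G$ by construction.

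It then remains to break each block into minimal pieces. Since $G$ is finite every orbit in $E^0$ is finite, and since $E$ is locally finite only finitely many edges have range in a given orbit, so each $p_i\Hh p_j$ is finite-dimensional. I would use the fact that a finite-dimensional nondegenerate $M_m$--$M_n$ correspondence is isomorphic to the direct sum of $\ell$ copies of $M_{m,n}$ for a unique $\ell\ge 0$: as a right Hilbert $M_n$-module it is a multiple $(\CC^n)^{\oplus k}$ of the minimal module $\CC^n\cong e_{11}M_n$, so its algebra of operators (compact $=$ adjointable, by finite-dimensionality) is $M_k$; a unital $*$-homomorphism $M_m\to M_k$ forces $k=m\ell$ with the left action unitarily equivalent to a multiple of the standard one, so the correspondence is $M_{m,n}^{\oplus\ell}$ with $\ell=\dim(p_i\Hh p_j)/(n_in_j)$. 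Replacing each edge $j\to i$ by $\ell_{ij}$ parallel edges, each carrying the minimal correspondence $M_{n_i,n_j}$, changes neither $\Hh=\bigoplus_{i,j}p_i\Hh p_j$ nor $\Oo_\Hh$, and yields the asserted graph of minimal $C^*$-correspondences with $C^*$-algebra $C^*(E)\rtimes G$.

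The main obstacle is this last step: one must check carefully that the coefficient algebra really is a direct sum of full matrix algebras (this is exactly where finiteness of $G$ enters, via finite-dimensionality of each $C^*(G_v)$), that every block $p_i\Hh p_j$ is finite-dimensional (this is where local finiteness of $E$ and finiteness of the orbits enter), and that every such block is a multiple of the unique minimal correspondence $M_{n_i,n_j}$. The remaining work — expressing the multiplicities $\ell_{ij}$ explicitly in terms of the orbits in $E^0$ and $E^1$ and the characters of the stabilizer subgroups, which is what makes the incidence matrix and hence the $K$-theory computable — is a longer but routine calculation using the convolution formulas for $\Hh_E\rtimes G$ recorded earlier.
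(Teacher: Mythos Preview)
Your proof is correct and follows essentially the same route as the paper: both invoke the Hao--Ng isomorphism $C^*(E)\rtimes G\cong\Oo_{\Hh_E\rtimes G}$, decompose $C_0(E^0)\rtimes G$ as a direct sum of matrix algebras indexed by pairs (orbit, character of stabilizer), cut $\Hh_E\rtimes G$ by the corresponding central projections, and then split each block into copies of the minimal correspondence $M_{n_i,n_j}$. You supply more justification than the paper for the last two steps---the finite-dimensionality of each block from local finiteness of $E$, and the structure theorem for finite-dimensional $M_m$--$M_n$ correspondences---which the paper simply asserts.
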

\begin{proof}
Since the group is finite, the orbits in $E^0$ and $E^1$ are finite. We  decompose the  $C^*$-correspondence ${\mathcal H}_E\rtimes G$ over the  $C^*$-algebra $C_0(E^0)\rtimes G$.
 This decomposition is obtained in two stages, from the orbits in $E^0$ and from the characters of the stabilizer groups.
For the first stage,
we consider the quotient graph $E/G$ and at each vertex $[v]\in E^0/G$  we associate the $C^*$-algebra $C(Gv)\rtimes G$, where  $Gv$ is the  orbit of $v$ in $E^0$ and at each edge $[e]\in E^1/G$ we associate the $C(Gr(e))\rtimes G$--$C(Gs(e))\rtimes G$ $C^*$-correspondence $C(Ge)\rtimes G$ of the  orbit $Ge$ in $E^1$.
For the second stage, we decompose each $C(Gv)\rtimes G\cong M_{|Gv|}\otimes C^*(G_v)$ into simple components.

Let $\ds C_0(E^0)\rtimes G\cong \bigoplus_{i=1}^mM_{n(i)}$, where $m\in {\mathbb N}\cup\{\infty\}$ and $M_{n(i)}$ denotes the set of $n(i)\times n(i)$  matrix algebras. Consider now the  graph with  $m$ vertices   and at each vertex $v_i$ we assign the $C^*$-algebra $M_{n(i)}$. If $p_i$ is the unit in $M_{n(i)}$, whenever $p_i({\mathcal H}_E\rtimes G)p_j\neq 0$, we decompose this as a direct sum of minimal $M_{n(i)}$--$M_{n(j)}$ $C^*$-correspondences. A minimal $C^*$-correspondence  is of the form $M_{n(i),n(j)}$, the set of rectangular matrices with $n(i)$ rows and $n(j)$ columns, with the obvious bimodule structure and inner product. Of course, $M_{n,n}=M_n$ and $M_{n,1}={\mathbb C}^n$. This decomposition determines  the number of edges between $v_j$ and $v_i$.
By construction, it follows that $C^*(E)\rtimes G$ is isomorphic to the $C^*$-algebra of this graph of $C^*$-correspondences.
\end{proof}

 \begin{rmk}
Given a compact group $G$, denote by $R(G)$ its representation ring. If $G$ acts on a $C^*$-algebra $A$, recall that $K_0(A\rtimes G)$ has a structure of $R(G)$-module. Indeed, given $M$  a finite dimensional $G$-module with character $\chi$ and $N$ a finitely generated projective $A\rtimes G$-module, then $M\otimes N$ has a structure of $A\rtimes G$-module and we can define the product $[N]\cdot\chi$ as $[M\otimes N]$.
In particular, given a finite group $G$ acting on a finite graph $E$, the groups $K_0(C^*(E)\rtimes G)$ and $K_0(C^*(E)^{\mathbb T}\rtimes G)$ have a structure of $R(G)$-modules.

 \end{rmk}
 \begin{rmk}
Nekrashevych (see \cite{N}) studied faithful actions of discrete groups $G$ on the set of finite words $X^*=\bigcup_{k=0}^\infty X^k$ over a finite alphabet $X$, which are  self-similar in the sense that for all $g\in G$ and $x\in X$ there exist  unique $y\in X$ and $h\in G$ such that $g\cdot(xw)=y(h\cdot w)$ where $w\in X^*$. A self-similar action determines an action of $G$ on the rooted tree $T_X$ with root $\emptyset$ and edges from $w\in X^*$ to $wx$ for $x\in X$. He constructed a $C^*$-correspondence $M=\bigoplus_{x\in X}C^*(G)$ over the  $C^*$-algebra $C^*(G)$, where the left action is the integrated form of a unitary representation of $G$ in ${\mathcal L}(M)$, defined using the self-similarity condition. Since in our notion of representation $\rho :G\to {\mathcal L}_{\mathbb C}(\mathcal H)$ the operator $\rho(g)$ is not $A$-linear, the $C^*$-correspondence $M$ is not the $C^*$-correspondence associated with the (infinite) tree $T_X$, and the Cuntz-Pimsner algebra ${\mathcal O}_M$, denoted also ${\mathcal O}_{(G,X)}$ in \cite{N},  is not isomorphic to the crossed product $C^*(T_X)\rtimes G$. Observe though that ${\mathcal O}_M$ contains a copy of the Cuntz algebra  ${\mathcal O}_n$, the tree $T_X$ is the universal covering of the graph $E_n$ with one vertex and $n=|X|$ edges, and $M={\mathbb C}^n\otimes C^*(G)$.

Exel and Pardo in \cite{EP}, inspired from  the Nekrashevych construction,  associate a $C^*$-algebra  ${\mathcal O}_{G,E}$  from a countable discrete group $G$ acting on a finite graph $E$ and a one-cocycle $\varphi:G\times E^1\to G$ which determines an action of $G$ on the space of finite paths $E^*$ such that $g\cdot (\alpha\beta)=(g\cdot \alpha)(\varphi(g,\alpha)\cdot\beta)$. This $C^*$-algebra is defined as the Cuntz-Pimsner algebra of a $C^*$-correspondence $M$ over $C(E^0)\rtimes G$ and contains a copy of the graph algebra $C^*(E)$. In particular for $G=\mathbb Z$ acting on a graph $E$ with $N\times N$ incidence matrix  $A$ by fixing the vertices  and permuting the edges in a way determined by another $N\times N$ integer matrix $B$ (which also determines the cocycle), they prove that  ${\mathcal O}_{G,E}$ is isomorphic to  Katsura's  algebra ${\mathcal O}_{A,B}$, see \cite{Ka08}, used to  model all Kirchberg algebras. Note that in this case $C(E^0)\rtimes G$ is isomorphic to the direct sum of $N$ copies of $C(\mathbb T)\cong C^*(\mathbb Z)$.

The relationship between the $C^*$-algebras studied by these authors and our crossed products $C^*(E)\rtimes G$ remains to be explored.
\end{rmk}

 \section{Examples}

\begin{example}
Let $G={\mathbb Z}_n$ act freely on its cyclic Cayley graph $E$ with $n$ vertices and $n$ edges.
We get a representation $\rho$ of ${\mathbb Z}_n$ on ${\mathcal H}={\mathbb C}^n$ and an action of ${\mathbb Z}_n$ on $A={\mathbb C}^n$ which permutes cyclically the basis.
In this case ${\mathcal L}_A({\mathcal H})=\{T\in M_n: T(\xi a)=T(\xi)a\}\cong{\mathbb C}^n$ (diagonal matrices).
Moreover, $(\rho, \rho)\cong{\mathbb C}I$ and ${\mathcal O}_\rho\cong C({\mathbb T})$, since the quotient graph $E/G$ has one vertex and one loop.

The crossed product ${\mathcal H}\rtimes{\mathbb Z}_n\cong M_n$ becomes a $C^*$-correspondence over $A\rtimes G\cong M_n$, and $C^*(E)\rtimes G\cong M_n\otimes C({\mathbb T})$. The graph of $C^*$-correspondences  has one vertex and one loop with $M_n$ attached to each.
\end{example}

\begin{example}
Let $E$ be the  graph with three vertices $v_1,v_2, v_3$ and edges connecting each $v_i$ with $v_j$ for $i\neq j$.

\[
\begin{tikzpicture}[shorten >=0.4pt, >=stealth, semithick]
\renewcommand{\ss}{\scriptstyle}
\node[inner sep=1.0pt, circle, fill=black]  (u) at (0,2) {};
\node[above] at (u.north)  {$\ss v_1$};
\node[inner sep=1.0pt, circle, fill=black]  (v) at (-1.5,-1) {};
\node[below] at (v.south)  {$\ss v_2$};
\node[inner sep=1.0pt, circle, fill=black]  (w) at (1.5,-1) {};
\node[below] at (w.south)  {$\ss v_3$};
\draw[->] (u) to [out=215, in=85] (v);{}
\draw[->] (v) to [out=40, in=-100] (u);
\draw[->] (w) to [out=95, in=-35] (u);
\draw[->] (u) to [out=-80, in=140] (w);
\draw[->] (v) to [out=-25, in=-155]  (w);
\draw[->] (w) to [out=155, in=25]  (v);

\end{tikzpicture}
\]
The permutation group $G=S_3$ acts (non-freely) on $E$ by permuting the vertices. The action on edges is uniquely determined. There is a single orbit in $E^0$ and $E^1$, and the stabilizer group for each vertex is ${\mathbb Z}_2$. Since ${\mathcal H}_E\cong {\mathbb C}^6$, the representation $\rho$ of $S_3$ is $6$-dimensional and ${\mathcal O}_\rho\cong C^*(E)^G\cong {\mathcal O}_2$. We have $C^*(E)\cong M_3({\mathcal O}_2)\cong {\mathcal O}_2$, so we get an action of $S_3$ on ${\mathcal O}_2$. Here $A={\mathbb C}^3$ and $A\rtimes S_3\cong M_3(C^*({\mathbb Z}_2))\cong M_3\oplus M_3$.
The $C^*$-correspondence ${\mathcal H}_E\rtimes G$ over $M_3\oplus M_3$ decomposes into $M_3\oplus M_3\oplus M_3\oplus M_3$. The graph of $C^*$-correspondences is
\[
\begin{tikzpicture}[shorten >=0.4pt,>=stealth, semithick]
\renewcommand{\ss}{\scriptstyle}
\node[inner sep=1.0pt, circle, fill=black]  (u) at (0,0) {};
\node[below] at (u.south)  {$\ss M_3$};
\node[inner sep=1.0pt, circle, fill=black]  (v) at (2,0) {};
\node[below] at (v.south)  {$\ss M_3$};
\draw[->] (u) to [out=40, in=140] node[above,black] {$\ss M_3$} (v);
\draw[->] (v) to [out=-140, in=-40] node[below,black] {$\ss M_3$} (u);
\draw[->] (v) .. controls (2.5,-0.5) and (3, -0.5) ..  (3,0)
node[right,black] {$\ss M_3$}
                               .. controls (3,0.5) and (2.5, 0.5) .. (v);
\draw[->] (u) .. controls (-0.5,0.5) and (-1, 0.5) ..  (-1,0)
node[left,black] {$\ss M_3$}
                               .. controls (-1,-0.5) and (-0.5, -0.5) ..
(u);
\end{tikzpicture}
\]
and $C^*(E)\rtimes G$ is stably isomorphic to ${\mathcal O}_2$.
\end{example}
\begin{example}\label{toeplitz}
Consider the graph $E$ with three vertices $v,v_1,v_2$ and four edges $e_1,e_2, f_1,f_2$ like in the figure.

\[
\begin{tikzpicture}shorten >=0.4pt, >=stealth, semithick]
\renewcommand{\ss}{\scriptstyle}
\node[inner sep=1.0pt, circle, fill=black]  (u) at (0,2) {};
\node[above] at (u.north)  {$\ss v$};
\node[inner sep=1.0pt, circle, fill=black]  (v) at (-1.5,-1) {};
\node[below] at (v.south)  {$\ss v_1$};
\node[inner sep=1.0pt, circle, fill=black]  (w) at (1.5,-1) {};
\node[below] at (w.south)  {$\ss v_2$};
\draw[->] (u) to node[left,black] {$\ss e_1$}  (v);{}

\draw[->] (u) to node[right,black] {$\ss e_2$} (w);
\draw[->] (v) to [out=25, in=155] node[above,black] {$\ss f_1$}  (w);
\draw[->] (w) to [out=-155, in=-25]  node[below,black] {$\ss f_2$} (v);

\end{tikzpicture}
\]
The group $G={\mathbb Z}_2$ acts on $E$ by fixing $v$ and interchanging $v_1$ and $v_2$. This action takes the edge $e_1$ into $e_2$ and the edge $f_1$ into $f_2$. Since the set $\{v\}$ is hereditary and saturated, $C^*(E)$ has an ideal isomorphic to ${\mathcal K}$, the $C^*$-algebra of compact operators such that $C^*(E)/{\mathcal K}\cong M_2(C(\mathbb T))$.
Since $E$ has sources, the $K$-theory of $C^*(E)$ is computed using Theorem 3.2 in \cite{RS} and
\[K_0(C^*(E))= {\mathbb Z}\oplus{\mathbb Z}_2, \;\; K_1(C^*(E))=0.\]

We have $A=\mathbb C^3, {\mathcal H}=\mathbb C^4, A\rtimes {\mathbb Z}_2\cong \mathbb C\oplus \mathbb C\oplus M_2$ and ${\mathcal H}\rtimes {\mathbb Z}_2\cong {\mathbb C}^8$ which decomposes as ${\mathbb C}^2\oplus {\mathbb C}^2\oplus M_2$. The crossed product $C^*(E)\rtimes {\mathbb Z}_2$ is the $C^*$-algebra of the following graph of $C^*$-correspondences
\[
\begin{tikzpicture}shorten >=0.4pt, >=stealth, semithick]
\renewcommand{\ss}{\scriptstyle}
\node[inner sep=1.0pt, circle, fill=black]  (u) at (-1.5,2) {};
\node[above] at (u.north)  {$ {\mathbb C}$};
\node[inner sep=1.0pt, circle, fill=black]  (v) at (1.5,2) {};
\node[above] at (v.north)  {${\mathbb C}$};
\node[inner sep=1.0pt, circle, fill=black]  (w) at (0,0) {};
\node[left] at (w.south)  {$M_2$};
\draw[->] (u) to  node[left,black] {$ {\mathbb C}^2$}  (w);{}

\draw[->] (v) to  node[right,black] {$ {\mathbb C}^2$} (w);
\draw[->] (w) ..controls (-2.5,-2.5) and (2.5,-2.5)..  (w) node[pos=0.5,below] {$ M_2$} ;

\end{tikzpicture}
\]
If $E'$ denotes the subjacent graph, there is an extension
\[0\to{\mathcal K}\to C^*(E')\to{\mathcal T}\to 0,\]
where ${\mathcal T}$ is the Toeplitz algebra. We have
\[K_0(C^*(E)\rtimes {\mathbb Z}_2)=K_0(C^*(E'))={\mathbb Z}^2,\;\; K_1(C^*(E)\rtimes {\mathbb Z}_2)=K_1(C^*(E'))=0.\]

\end{example}
\begin{example}
Let $S_3$ act on the graph $E$ with one vertex and three loops by permuting the loops. We get a $3$-dimensional representation $\rho$ of $S_3$ and a non-free action  on ${\mathcal O}_3$. We already know (see \cite{MRS}) that ${\mathcal O}_\rho\cong {\mathcal O}_3^{S_3}$ is a full corner in a graph algebra with incidence matrix
\[ \left[\begin{array}{ccc}1&0&1\\0&1&1\\1&1&2\end{array}\right].\]
Since $S_3\cong{\mathbb Z}_3\rtimes{\mathbb Z}_2$, we have ${\mathcal O}_3\rtimes S_3\cong({\mathcal O}_3\rtimes{\mathbb Z}_3)\rtimes{\mathbb Z}_2$. We will describe the graph of $C^*$-correspondences using this iterated crossed product.

It follows  that ${\mathcal O}_3\rtimes{\mathbb Z}_3$ is isomorphic to $C^*(E(c))$, where $c:E^1\to \widehat{{\mathbb Z}_3}$ is a cocycle and $E(c)$ is the  graph with three vertices $v_1,v_2, v_3$ and nine edges
 connecting each $v_i$ with $v_j$.
\[
\begin{tikzpicture}[shorten >=0.4pt, >=stealth, semithick]
\renewcommand{\ss}{\scriptstyle}
\node[inner sep=1.0pt, circle, fill=black]  (u) at (0,2) {};
\node[above] at (u.north)  {$\ss v_1$};
\node[inner sep=1.0pt, circle, fill=black]  (v) at (-1.5,-1) {};
\node[below] at (v.south)  {$\ss v_2$};
\node[inner sep=1.0pt, circle, fill=black]  (w) at (1.5,-1) {};
\node[below] at (w.south)  {$\ss v_3$};
\draw[->] (u) to [out=215, in=85] (v);{}
\draw[->] (v) to [out=40, in=-100] (u);
\draw[->] (w) to [out=95, in=-35] (u);
\draw[->] (u) to [out=-80, in=140] (w);
\draw[->] (v) to [out=-25, in=-155]  (w);
\draw[->] (w) to [out=155, in=25]  (v);
\draw[->] (u) ..controls (-1.5,4.5) and (1.5,4.5)..(u);
\draw[->] (v) ..controls (-3.5, -3.5) and (-4, -0.5) .. (v);
\draw[->] (w) ..controls (3.5,-3.5) and (4, -0.5).. (w);
\end{tikzpicture}
\]
The group ${\mathbb Z}_2$ acts on $E(c)$ by fixing $v_1$ and interchanging $v_2$ with $v_3$. The action on edges is uniquely determined. If $\pi$ is the corresponding representation of ${\mathbb Z}_2$  on ${\mathbb C}^9$, it follows that ${\mathcal O}_\pi\cong C^*(E(c))^{{\mathbb Z}_2}$. The quotient graph $E(c)/{\mathbb Z}_2$ has two vertices $u_1, u_2$ corresponding to the two orbits in $E(c)^0$ and five edges corresponding to the orbits in $E(c)^1$: one loop at $u_1$, two loops at $u_2$, one edge from $u_1$ to $u_2$ and one edge from $u_2$ to $u_1$.
\[
\begin{tikzpicture}[shorten >=0.4pt,>=stealth, semithick]
\renewcommand{\ss}{\scriptstyle}
\node[inner sep=1.0pt, circle, fill=black]  (u) at (-2,0) {};
\node[below] at (u.south)  {$\ss u_1$};
\node[inner sep=1.0pt, circle, fill=black]  (v) at (2,0) {};
\node[below] at (v.south)  {$\ss u_2$};
\draw[->] (u) to [out=-25, in=-155]  (v);
\draw[->] (v) to [out=155, in=25]  (u);
\draw[->] (u) .. controls (-5,0.5) and (-3.5, 3.5) .. (u);
\draw[->] (v) ..controls (3.5,-3.5) and (5, -0.5).. (v);
\draw[->] (v) ..controls (3.5,3.5) and (5, 0.5).. (v);
\end{tikzpicture}
\]
We have $C^*(E(c))\cong {\mathcal O}_3$, so we get a non-free action of ${\mathbb Z}_2$ on ${\mathcal O}_3$. Here $A={\mathbb C}^3$ and $A\rtimes {\mathbb Z}_2\cong {\mathbb C}\oplus{\mathbb C}\oplus M_2\cong C^*(S_3)$.
 Moreover, ${\mathcal O}_3\rtimes S_3\cong C^*(E(c))\rtimes {\mathbb Z}_2$ is the $C^*$-algebra of the following graph of $C^*$-correspondences.
 For the vertex $u_1$ the  $C^*$-algebra is $C^*({\mathbb Z}_2)\cong {\mathbb C}^2$ and for $u_2$ the $C^*$-algebra is ${\mathbb C}^2\rtimes{\mathbb Z}_2\cong M_2$. For the loop at $u_1$ the $C^*$-correspondence is ${\mathbb C}^2$. For each of the two loops at $u_2$ we have a copy of $M_2$. Finally, for each of the remaining two edges in $E(c)/{\mathbb Z}_2$ we have ${\mathbb C}^2\oplus {\mathbb C}^2$. Since the vertex $u_1$ splits in two,
  the $C^*$-correspondence ${\mathbb C}^9\rtimes{\mathbb Z}_2$ over ${\mathbb C}^2\oplus M_2$ decomposes further as ${\mathbb C}\oplus {\mathbb C}\oplus M_2\oplus M_2\oplus{\mathbb C}^2\oplus{\mathbb C}^2\oplus{\mathbb C}^2\oplus{\mathbb C}^2$ and we get the following graph of minimal $C^*$-correspondences:
there are three vertices  with $C^*$-algebras ${\mathbb C}, {\mathbb C}$ and $M_2$ respectively. The incidence matrix of the graph is

\[\left[\begin{array}{ccc}1&0&1\\0&1&1\\1&1&2\end{array}\right]\]
 and the $C^*$-correspondences are as in the figure
\[
\begin{tikzpicture}[shorten >=0.2pt,>=stealth, semithick]
\renewcommand{\ss}{\scriptstyle}
\node[inner sep=1.0pt, circle, fill=black]  (u) at (-1.5,2) {};
\node[left] at (u)  {$\mathbb C$};
\node[inner sep=1.0pt, circle, fill=black]  (v) at (1.5,2) {};
\node[right] at (v.east)  {$\mathbb C$};
\node[inner sep=1.0pt, circle, fill=black]  (w) at (0,-1) {};
\node[below] at (w.south)  {$ M_2$};
\draw[->] (w) to [out=95, in=-35] (u) node at (-1.5, 0.5)  {${\mathbb C}^2$};
\draw[->] (u) to [out=-80, in=140] (w) node at (-0.6, 0.5) {${\mathbb C}^2$};
\draw[->] (w) to [out=85, in=215]  (v) node at (0.6,0.5) {${\mathbb C}^2$};
\draw[->] (v) to [out=-100, in=40]  (w) node at (1.5, 0.5) {${\mathbb C}^2$};
\draw[->] (u) ..controls (-3,4.5) and (0,4.5)..(u) node[pos=0.3, left]{$\mathbb C$};
\draw[->] (v) ..controls (3,4.5) and (0,4.5)..(v) node[pos=0.3, right]{$\mathbb C$} ;
\draw[->] (w) ..controls (-2.5, -0.5) and (-2.5, -3.5) ..  (w)node[pos=0.5,left] {$ M_2$};
\draw[->] (w) ..controls (2.5,-0.5) and (2.5, -3.5).. (w)node[pos=0.5,right] {$M_2$} ;
\end{tikzpicture}
\]
The group $S_3$ also acts on the core $M_{3^\infty}$ of ${\mathcal O}_3$. Since ${\mathcal O}_3\rtimes S_3$ is strongly Morita equivalent to a graph algebra, it follows that its core is strongly Morita equivalent to $M_{3^\infty}\rtimes S_3$.
\end{example}

\begin{example}

Let the dihedral group $D_4\cong {\mathbb Z}_4\rtimes{\mathbb Z}_2$ act in the natural way on the ``cross" graph $E$ with five vertices and four edges by fixing the central vertex.
\[
\begin{tikzpicture}[shorten >=0.4pt,>=stealth, semithick]
\node[inner sep=1.0pt, circle, fill=black]  (u) at (0,0) {};
\node[inner sep=1.0pt, circle, fill=black]  (v) at (-2,0) {};
\node[inner sep=1.0pt, circle, fill=black]  (w) at (2,0) {};
\node[inner sep=1.0pt, circle, fill=black]  (x) at (0,2) {};
\node[inner sep=1.0pt, circle, fill=black]  (y) at (0,-2) {};
\draw[->] (u) to  (v) {} ;
\draw[->] (u) to  (w) {} ;

\draw[->] (u) to  (x) {} ;

\draw[->] (u) to  (y) {} ;

\end{tikzpicture}
\]
We get an action of $D_4$ on $C^*(E)\cong M_5$. It is known that $C^*(E)\rtimes D_4\cong M_5\otimes C^*(D_4)$. The vertex space has two orbits and the edge space one orbit.
We have ${\mathbb C}^5\rtimes D_4\cong C^*(D_4)\oplus {\mathbb C}^4\rtimes D_4\cong {\mathbb C}^4\oplus M_2\oplus M_4\oplus M_4$ and ${\mathbb C}^4\rtimes D_4$ decomposes as ${\mathbb C}^4\oplus{\mathbb C}^4\oplus {\mathbb C}^4\oplus {\mathbb C}^4\oplus M_{2,4}\oplus M_{2,4}$, obtaining the following graph of $C^*$-correspondences:
\[
\begin{tikzpicture}[shorten >=0.2pt,>=stealth, semithick]
\node[inner sep=1.0pt, circle, fill=black]  (u) at (0,0) {};
\node[below] at (u)  {$M_2$};
\node[inner sep=1.0pt, circle, fill=black]  (v) at (-2,0) {};
\node[left] at (v)  {$M_4$};
\node[inner sep=1.0pt, circle, fill=black]  (w) at (2,0) {};
\node[right] at (w)  {$M_4$};
\node[inner sep=1.0pt, circle, fill=black]  (x) at (-2,2) {};
\node[left] at (x)  {$\mathbb C$};
\node[inner sep=1.0pt, circle, fill=black]  (y) at (-2,-2) {};
\node[left] at (y)  {$\mathbb C$};
\node[inner sep=1.0pt, circle, fill=black]  (t) at (2,2) {};
\node[right] at (t)  {$\mathbb C$};
\node[inner sep=1.0pt, circle, fill=black]  (z) at (2,-2) {};
\node[right] at (z)  {$\mathbb C$};
\draw[->] (u) to  (v)node at (-1,0.5) {$M_{2,4}$} ;
\draw[->] (u) to  (w) node at (1,0.5) {$M_{2,4}$} ;

\draw[->] (x) to  (v) node at (-2.5,1) {${\mathbb C}^4$} ;

\draw[->] (y) to  (v) node at (-2.5,-1) {${\mathbb C}^4$} ;

\draw[->] (t) to (w) node at (2.5,1) {${\mathbb C}^4$} ;
\draw[->] (z) to (w) node at (2.5,-1) {${\mathbb C}^4$} ;
\end{tikzpicture}
\]
\end{example}

\begin{example}{\label{sym}} Let the symmetric group $S_3=\la\tau, \sigma\ra$ act on ${\mathcal O}_2$ by $\tau(s_1)=s_2, \tau(s_2)=s_1, \sigma(s_1)=\omega s_1, \sigma(s_2)=\omega^2 s_2$, where $\tau=(12), \sigma=(123)$ and $\omega^2+\omega+1=0$. This action corresponds to the two-dimensional irreducible representation $\rho$ of $S_3$, and it commutes with the gauge action. Then ${\mathcal O}_2^{S_3}$ and ${\mathcal O}_2\rtimes S_3$ have the K-theory of ${\mathcal O}_3$, since (see \cite{MRS}) ${\mathcal O}_\rho$ is a full corner in the  algebra of the graph with three vertices and incidence matrix
\[B=\left[\begin{array}{ccc}0&1&0\\1&1&1\\0&1&0\end{array}\right].\]
In particular, we get an action of $S_3$ on the CAR algebra $M_{2^\infty}$ and
\[K_0(M_{2^\infty}\rtimes S_3)\cong \varinjlim({\mathbb Z}^3, B).\]
\end{example}

{\small \hfill Received March 26, 2015}

\end{document}